\newcounter{mnotecount}[section]
\newcommand{\rmnote}[1]{}
\def\theequation{\thesection.\@arabic\c@equation}
\newtheorem{theorem}{Theorem}[section]
\def\bs{\begin{split}}
\def\es{\end{split}}
\newcommand{\R}{\mathbb{R}}
\renewcommand{\theequation}{\thesection.\arabic{equation}}
\newtheorem{proposition}  {Proposition}[section]
\begin{document}
\title[Nondegeneracy of nonradial solutions]{Nondegeneracy of nonradial sign-changing solutions to the nonlinear Schr\"odinger equations}

\author{Weiwei Ao}
\address{\noindent W. Ao -Department of Mathematics, University of British Columbia, Vancouver, BC V6T1Z2, Canada }\email{wwao@math.ubc.ca}

\author{Monica Musso}
\address{\noindent M. Musso -
Departamento de Matem\'atica,
Pontificia Universidad Catolica de Chile, Avda. Vicu\~na Mackenna
4860, Macul, Chile}
\email{mmusso@mat.puc.cl}

\author{Juncheng Wei}
\address{\noindent J. Wei -Department of Mathematics, University of British Columbia, Vancouver, BC V6T1Z2, Canada }\email{jcwei@math.ubc.ca}

\thanks{ The research of the second  author
has been partly supported by Fondecyt Grant 1120151 and Millennium Nucleus Center for Analysis of PDE, NC130017. The research of the third author is partially supported by NSERC of Canada.}

\date{}\maketitle

\begin{abstract}
We prove that the non-radial sign-changing solutions to the nonlinear Schr\"odinger equation
\begin{equation*}
\Delta u-u+|u|^{p-1}u=0 \mbox{ in }\R^N, \quad u \in H^1 (\R^N )
\end{equation*}
constructed by Musso, Pacard and Wei \cite{MPW} is non-degenerate. This provides the first example of non-degenerate sign-changing solution with finite energy to the
above nonlinear Schr\"odinger equation.
\end{abstract}

\setcounter{equation}{0}
\section{Introduction and statement of main results}\label{sec1}
In this paper, we consider the nonlinear semilinear elliptic equation:
\begin{equation}\label{eq1}
\Delta u-u+|u|^{p-1}u=0 \mbox{ in }\R^N, \ u\in H^1(\R^N),
\end{equation}
where $p$ satisfies $1<p<\infty$ when $N=2$, and $1<p<\frac{N+2}{N-2}$ if $N\geq 3$. Equation (\ref{eq1}) arises in various models in physics, mathematical physics and biology. In particular, the study of standing waves for the nonlinear Klein-Gordon or Schr\"odinger equations reduces to (\ref{eq1}). We refer to the papers of Berestycki-Lions \cite{BL}, \cite{BL1}, and Bartsch-Willem \cite{BW} for further references and motivations.

\medskip
Denote the set of non-zero finite energy solutions to (\ref{eq1}) by
\begin{equation*}
\Sigma:=\{u\in H^1(\R^N): \Delta u-u+|u|^{p-1}u=0\}.
\end{equation*}

If $u \in \Sigma$ and $u>0$, the classical result of Gidas, Ni and Nirenberg \cite{GNN} asserts that $u$ is then radially symmetric.  Indeed, it is known (\cite{K, kz}) that
there exists a unique radially symmetric (in
fact radially decreasing) positive solution of
\[
\Delta w - w + w^{p} \,  = 0 \quad {\mbox {in}} \quad \R^N,
\]
which tends to $0$ as $|x|$ tends to $\infty$. All the other
positive solutions to (\ref{eq1}) belonging to $\Sigma$  are translations of $w$. 

\medskip
Let $L_0$ be the linearized operator around $w$, defined by
\begin{equation} \label{L0}
L_0 : = \Delta  - 1 + p \, w^{p-1} \, .
\end{equation}
The natural invariance of problem (\ref{eq1}) under the group of isometries in $\R^N$ reduces to the fact that the functions
\begin{equation} \label{zeta}
\partial_{x_1} w  , \ldots , \partial_{x_N} w \,  ,
\end{equation}
naturally belong to the kernel of the operator $L_0$. 
The solution $w $ is {\em nondegenerate} in the sense
that $L^\infty$-kernel of the operator $L_0$ is spanned by the functions
given in (\ref{zeta}). See \cite{nitakagi}.

\medskip
No other example of {\em nondegenerate} solution to (\ref{eq1}) in $\Sigma$ is known. The purpose of this paper is to provide the first example of {\em nondegenerate} solution
to (\ref{eq1}) in $\Sigma$ other than $w$.

\medskip

Concerning existence of other solutions to (\ref{eq1}) in $\Sigma$, several results are available in the literature.  Berestychi-Lions \cite{BL}, \cite{BL1} and Struwe \cite{St} have obtained the existence of infinitely many radially symmetric sign-changing solutions. The proofs of these results are based on the use  of variational methods. The existence of nonradial sign-changing solutions was first proved by Bartsch-Willem \cite{BW} in dimension $N=4$ and $N\geq 6$. Also in this case, the resut is proved by means of variational methods, and their key idea is to look for solutions invariant under the action $O(2)
\times O(N-2)$ in order to recover some compactness property. Later on, the result was generalized by Lorca-Ubilla \cite{LU} to handle the $N=5$ dimensional case. 
Besides the symmetry property of the solutions, the mentioned results do not provide any other qualitative properties for the solutions. A different approach, and a different construction, have been developed recently in \cite{MPW} and \cite{AMPW}, where new types of non radial sign-changing finite-energy solutions to (\ref{eq1})  are built and a detailed description of these solutions is provided.

\medskip
The main purpose of this paper is to prove that the solutions constructed by Musso-Pacard-Wei in \cite{MPW} are rigid, up to the transformations of the equation. In other words, these solutions are {\em non-degenerate}, in the sense of the definition introduced by Duyckaerts-Kenig-Merle \cite{DKM}. 

\medskip
To explain what being a {\em non degenerate} solution for a given $u \in \Sigma $ means, we recall all the possible invariances of Equation (\ref{eq1}). 
We have that Equation (\ref{eq1}) is invariant under the following two transformations:

$(1)$ (translation): \ \ If $u\in \Sigma$,then $u(x+a)\in \Sigma$, $\forall a\in \R^N$;

$(2)$ (rotation): \ \ If $u\in \Sigma$, then $u(Px)\in \Sigma$, where $P\in O_N$ and $O_N$ is the classical orthogonal group.

If $u\in \Sigma$, we denote
\begin{equation}\label{eq3}
L_u=\Delta-1+p|u|^{p-1}
\end{equation}
the linearized operator around $u$. Define the null space of $L_u$
\begin{equation}
\mathcal{Z}_u=\{f\in H^1(\R^N): L_uf=0\}.
\end{equation}
If we denote by $\mathcal{M}$ the group of isometries of $H^1(\R^N)$ generated by the previous two transformations, then the elements in $\mathcal{Z}_u$ generated by the family of transformation $\mathcal{M}$ define the following vector space:
\begin{equation}\label{eq4}
\tilde{\mathcal{Z}}_u=\operatorname{span}\left\{\begin{array}{l}\partial_{x_j}u, \ 1\leq j\leq N\\
(x_j\partial_{x_k}-x_k\partial_{x_j})u, \ 1\leq j< k\leq N
\end{array}
\right\} .
\end{equation}

A solution $u$ of (\ref{eq1}) is {\em non-degenerate} if
\begin{equation}\label{eq2}
\mathcal{Z}_u=\tilde{\mathcal{Z}}_u,
\end{equation}
see \cite{DKM}.

As we already mentioned, so far the only non degeneracy example of $u\in\Sigma$ is the positive solution $w$. In fact, in this case
$$
(x_j\partial_{x_k}-x_k\partial_{x_j}) w = 0 , \quad \forall   \quad 1\leq j< k\leq N.
$$
 and hence 
$$
{\mathcal Z}_w = \operatorname{span}\left\{ \partial_{x_j}w, \ 1\leq j\leq N
\right\} .
$$
The proof of the non-degeneracy of $w$ relies heavily on the radial symmetry of $w$. For  nonradial solutions, the strategy to prove non-degeneracy for the radial case no longer works out. So a new strategy is needed for the non radially symmetric solutions.

A similar problem has arised in the study of non radial sign-changing finite-energy solutions for the Yamabe typw problem
$$
\Delta u + |u|^{4\over N-2} u = 0 , \quad {\mbox {in}} \quad \R^n, \quad u \in H^1 (\R^N ),
$$
for $N\geq 3$.   In \cite{MW} Musso-Wei  first introduce some new ideas to deal with the  non degeneracy for a  given non radial sign changing solution to the above problem. Indeed, they  successfully analyze the non degeneracy of some nonradial solutions to the Yamabe problem, that were previously constructed in \cite{dmpp}. See also \cite{dmpp1} for other constructions. In this paper, we will adopt the idea developed in \cite{MW} to analyze the non-degeneracy of the solutions of (\ref{eq1}) constructed by Musso-Pacard-Wei \cite{MPW}.

\medskip

The main result of this paper can be stated as follows:
\begin{theorem}\label{theorem1}
There exists a sequence of non radial sign-changing solutions to (\ref{eq1}) with arbitrary large energy, and each solution is {\em non degenerate} in the sense of (\ref{eq2}).
\end{theorem}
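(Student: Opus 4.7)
The plan is to argue by a contradiction/blow-up analysis modeled on the strategy of Musso and Wei \cite{MW} for nonradial sign-changing solutions of the Yamabe problem, adapted to the structure of the MPW solutions. Recall that each MPW solution $u_k$ has the schematic form
$$
u_k(x) \,\approx\, w(x) - \sum_{j=1}^{k} w\bigl(x-\xi_{j,k}\bigr),
$$
with bump locations $\xi_{j,k}$ placed on a regular polygon of radius $R_k\to\infty$, so that $u_k$ has a large dihedral symmetry of order $k$. The space $\tilde{\mathcal Z}_{u_k}$ then has a known, explicit dimension, and the task is to show that any $\phi_k \in \mathcal Z_{u_k}$ must lie in it. I would assume for contradiction that for arbitrarily large $k$ there exists $\phi_k \in \mathcal Z_{u_k}\setminus \tilde{\mathcal Z}_{u_k}$; after subtracting the $L^2$-projection of $\phi_k$ on $\tilde{\mathcal Z}_{u_k}$ and normalizing to $\|\phi_k\|_{L^\infty}=1$, I may assume $\phi_k$ is orthogonal to $\tilde{\mathcal Z}_{u_k}$.

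The first step is a local blow-up at each bump. Set $\tilde\phi_{j,k}(y):=\phi_k(y+\xi_{j,k})$ for $j=0,1,\ldots,k$ (with $\xi_{0,k}=0$). Each $\tilde\phi_{j,k}$ satisfies a linearized equation whose coefficients converge locally uniformly to those of the operator $L_0$ defined in (\ref{L0}), because of the asymptotic profile of $u_k$. Since $\|\tilde\phi_{j,k}\|_{L^\infty}\le 1$, elliptic estimates and a subsequence argument produce a limit $\tilde\phi_{j,\infty}$ satisfying $L_0\tilde\phi_{j,\infty}=0$ in $\R^N$, hence $\tilde\phi_{j,\infty}\in \operatorname{span}\{\partial_{x_i}w:1\le i\le N\}$ by the classical non-degeneracy of $w$ recalled above. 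This yields candidate ``translation coefficients'' $c_{j,i}$ attached to each bump.

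Next I would set up and reduce the linear problem. Writing
$$
\phi_k = \sum_{j=0}^{k}\sum_{i=1}^{N} c_{j,i,k}\, Z_{j,i,k} + \phi_k^{\perp}, \qquad Z_{j,i,k}(x):=\partial_{x_i}w(x-\xi_{j,k}),
$$
with $\phi_k^\perp$ orthogonal in $L^2$ to every $Z_{j,i,k}$, a uniform invertibility of $L_{u_k}$ on this orthogonal complement, established in the weighted norms already underlying the MPW construction, gives a bound of the type $\|\phi_k^\perp\|=o(|\mathbf c_k|)$ with $|\mathbf c_k|:=\max_{j,i}|c_{j,i,k}|$. Testing the identity $L_{u_k}\phi_k=0$ against each $Z_{j,i,k}$ and expanding the resulting interaction integrals by means of the exponential decay of $w$ produces a reduced linear system
$$
M_k\,\mathbf c_k \,=\, o(1)\,\mathbf c_k,
$$
where $M_k$ is an interaction matrix whose principal part couples nearest-neighbour vertices on the polygon, and the center to each vertex, through Toda-type coefficients.

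The main obstacle, and the technical heart of the argument, is the kernel analysis of $M_k$. Following the idea of \cite{MW}, I would exploit the cyclic $\mathbb Z_k$-symmetry of the configuration to perform a discrete Fourier transform in the index $j$, block-diagonalizing $M_k$ into $k$ Fourier blocks $M_k^{(m)}$ whose size is independent of $k$. The Fourier modes matching the known symmetries (translations of $\R^N$ together with the rotations that survive the configuration's isometry group) should have nontrivial kernels which reproduce exactly $\tilde{\mathcal Z}_{u_k}$; the remaining blocks must be shown to be invertible with quantitative lower bounds on their smallest singular values, uniformly in $k$. This demands expanding the interaction coefficients to the precise order at which Toda-type behaviour dictates the spectrum and checking that no unexpected zero eigenvalues appear outside the symmetry-dictated ones. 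Once this spectral picture is in hand, the enforced orthogonality of $\phi_k$ to $\tilde{\mathcal Z}_{u_k}$ forces $\mathbf c_k\to 0$, whence $\phi_k^\perp\to 0$ and finally $\|\phi_k\|_{L^\infty}\to 0$, contradicting the normalization and establishing Theorem \ref{theorem1}.
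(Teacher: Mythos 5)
Your overall scheme — decompose the kernel element into bump-localized translation modes plus a residual, prove an a priori bound on the residual, project the equation to obtain a finite linear system for the mode coefficients, analyze its kernel via circulant/Fourier techniques, and combine with the orthogonality to $\tilde{\mathcal Z}_u$ — is indeed the strategy the paper follows (after \cite{MW}). But you have misread the geometry of the MPW solutions in a way that would materially change every subsequent step. The MPW solutions are \emph{not} of the form ``central bump plus $k$ bumps on a polygon of order $k\to\infty$.'' In \cite{MPW} the symmetry order $k\ge 7$ is \emph{fixed}; the growing parameter is $\ell$ (equivalently $m,n=O(\ell)$), and the configuration consists of $(m+2n)\times k$ bumps: for each of the $k$ rotational copies, $m+1$ copies of $+w$ lie along a radial segment joining the inner and outer polygon vertices, and $2n-1$ alternating copies lie along each outer polygon edge. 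Consequently the interaction matrix $M$ is not a $k\times k$ circulant coupled to a center; it is a $(2n+m)k$-dimensional matrix with a nontrivial \emph{tridiagonal} Toeplitz structure along the segments (matrices $T_{m-1}$, $T_{2n-1}$) nested inside a circulant structure in the cyclic index $i\in\mathbb Z_k$.

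Because of this, your statement that a discrete Fourier transform in the bump index ``block-diagonalizes $M_k$ into $k$ Fourier blocks whose size is independent of $k$'' cannot be carried out as written: the natural Fourier variable is the cyclic index $i$ (over the \emph{fixed} group $\mathbb Z_k$), and before Fourier can isolate $4\times 4$ blocks one must first eliminate the $m-1$ interior modes on each radial segment and the $2n-1$ interior modes on each outer edge by inverting the tridiagonal matrices $T_{m-1}$, $T_{2n-1}$. This two-stage reduction (segment elimination, then Fourier in $i$) is precisely what Sections 4 and 7 of the paper do, and it is where the Toda-type vectors $S^\uparrow,S^\downarrow$ enter. Your blow-up normalization is a reasonable alternative to the paper's direct quantitative argument, and your final contradiction step would be fine once the matrix analysis is correct, but the proof as proposed would stall at the kernel analysis because it is attached to the wrong geometric picture. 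To repair it, replace the ``polygon of order $k\to\infty$'' configuration with the true $(m+2n)\times k$-bump, fixed-$k$, large-$\ell$ configuration, and insert the segment-elimination step before Fourier-analyzing.
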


\medskip
We believe that the property of being non degenerate for the solutions in Theorem \ref{theorem1} can be used to obtain new type of constructions for sign changing solutions to (\ref{eq1}), or related problems in bounded domain with Dirichlet or Neumann boundary conditions.

The paper is organized as follows. In Section \ref{sec2}, we introduce the solutions constructed by Musso, Pacard and Wei in \cite{MPW}. In Section \ref{sec3}, we sketch the main steps in the proof of Theorem \ref{theorem1}. Section \ref{sec4} to Section \ref{sec7} are devoted to the proof of Theorem \ref{theorem1}.

\setcounter{equation}{0}
\section{Description of the solutions}\label{sec2}
In this section, we describe the solutions $u_\ell$ constructed in \cite{MPW}, and recall some properties that will be useful for later purpose.

To descript the solutions, let us introduce some notations first. The canonical basis of $\R^N$ will be denoted by
\begin{equation}
{\bf e}_1=(1,0,\cdots,0), \ {\bf e}_2=(0,1,0,\cdots,0), \cdots, {\bf e}_N=(0,\cdots,0,1),
\end{equation}
and fix an integer $k\geq 7$.

First we are given two positive integers  $m,n$ and two positive real numbers $\ell, \bar{\ell}$ which are related by
\begin{equation*}
2\sin\frac{\pi}{k}m\ell=(2n-1)\bar{\ell}.
\end{equation*}
We consider the {\it   inner polygon}  which is the regular polygon in $\R^2\times \{0\}\subset \R^N$, with $k $ edges and whose vertices are given by the orbit of the point
\begin{equation*}
y_1=\frac{\bar{\ell}}{2\sin\frac{\pi}{k}}{\bf e}_1\in \R^N,
\end{equation*}
under the action of the group generated by $R_k$. Here $R_k\in O(2)\times O(N-2)$ is the rotation of angle $\frac{2\pi}{k} $ in the $(x_1,x_2)$ plane. By construction, the edges of this polygon has length $\ell$. Define now the {\it outer polygon} which is a regular polygon with $k$ edges and  whose vertices are the orbit of the point
\begin{equation*}
y_{m+1}=y_1+m\ell {\bf e}_1,
\end{equation*}
under the group generated by $R_k$.  By construction, the distance between the points $y_1$ and $y_{m+1}$ is equal to $m\ell$ and we denote by $y_j$ for $j=2,\cdots,m$ the points evenly distributed on the segment between these two points. Namely,
\begin{equation*}
y_j=y_1+(j-1)\ell{\bf e}_1 \mbox{ for } j=2,\cdots,m.
\end{equation*}
The edges of the outer polygon have length $2n\bar{\ell}$ and we distribute evenly points $y_j$, $j=m+2,\cdots,m+2n$ along this segment. More precisely, if we define
\begin{equation*}
{\bf t}=-\sin\frac{\pi}{k}{\bf e}_1+\cos\frac{\pi}{k}{\bf e}_2\in \R^N,
\end{equation*}
then, the points $y_j$ are given by
\begin{equation*}
y_j=y_{m+1}+(j-m-1)\bar{\ell}{\bf t} \mbox{ for }j=m+2,\cdots,m+2n.
\end{equation*}
We also denote by
\begin{equation*}
z_h=y_j, \mbox{ for } h=1,\cdots,2n-1, \ h=j-m-1.
\end{equation*}
Observe that, by construction
\begin{equation*}
R_ky_{m+1}=y_{m+1}+2n\ell{\bf t}.
\end{equation*}
In \cite{MPW}, they constructed solutions that can be described as follows:
\begin{equation*}
u_{\ell}(x)=U+\phi(x),
\end{equation*}
where
\begin{equation*}
U=\sum_{i=0}^{k-1}(\sum_{j=1}^{m+1}w(x-R_k^iy_j)+\sum_{j=m+2}^{m+2n}(-1)^{j-m-1}w(x-R_k^iy_j)),
\end{equation*}
and $\phi=o(1)$ as $\ell \to \infty$. The function $w$ is the unique solution of the following equation:
\begin{equation*}
\left\{\begin{array}{l}
\Delta u-u+u^p=0,\ u>0 \mbox{ in } \R^N\\
\max_{x\in \R^N}u(x)=u(0)
\end{array}
\right.
\end{equation*}
The solutions they constructed enjoy the following invariance
\begin{equation*}
u(x)=u(Rx), \mbox{ for }R\in \{I_2\}\times O(N-2),
\end{equation*}
and
\begin{equation*}
u(R_kx)=u(x) \mbox{ and }u(\Gamma x)=u(x)
\end{equation*}
where $R_k\in O(2)\times O(N-2)$ is the rotation of angle $\frac{2\pi}{k} $ in the $(x_1,x_2)$ plane and $\Gamma \in O(2)\times I(N-2)$ is the symmetry with respect to the hyperplane $x_2=0$.

The numbers $m,n,\ell,\bar{\ell}$ are related by the following two equations:
\begin{equation}
\Psi(\ell)=2\sin\frac{\pi}{k}\Psi(\bar{\ell}),
\end{equation}
and
\begin{equation}
2\sin\frac{\pi}{k}m\ell=(2n-1)\bar{\ell}.
\end{equation}
where
\begin{equation}\label{psi}
\Psi(s)=-\int w(x-s{\bf e}) div(w^p(x){\bf e})dx
\end{equation}
and ${\bf e}\in \R^N$ is any unit vector. The definition of $\Psi$ is independent of ${\bf e}$.

In \cite{MPW}, they proved that $m,n=O(\ell)$ and
\begin{equation*}
\bar{\ell}=\ell+\ln(2\sin\frac{\pi}{k})+O(\frac{1}{\ell}),
\end{equation*}
\begin{equation*}
-(\log\Psi)'(s)=1+\frac{N-1}{2s}+O(\frac{1}{s^2}).
\end{equation*}

Since the solution the constructed has the form $u_\ell=U+\phi$, in terms of $\phi$, the equation (\ref{eq1}) gets rewritten as
\begin{equation}
\Delta \phi-\phi+p|U|^{p-1}\phi+E+N(\phi)=0
\end{equation}
where
\begin{equation}
E=\Delta U-U+|U|^{p-1}U,
\end{equation}
and
\begin{equation}
N(\phi)=|U+\phi|^{p-1}(U+\phi)-|U|^{p-1}U-p|U|^{p-1}\phi.
\end{equation}

Let us fix a number $-1<\eta<0$, and define the weighted norm
\begin{equation}\label{norm}
\|h\|_*=\sup_{x\in \R^N}|(\sum_{y\in \Pi}e^{\eta|x-y|})^{-1}h(x)|
\end{equation}
where
\begin{equation}
\Pi=\cup_{i=0}^{k-1}\{R_k^iy_j : j=1,\cdots,m+2n\}.
\end{equation}

In \cite{MPW}, it is proved that there exists $\ell_0>0$ and $\xi>0$, such that for $\ell>\ell_0$, $\phi$ satisfies the following estimate (Proposition 4.1 in \cite{MPW}):
\begin{equation}
\|\phi\|_*\leq Ce^{-\frac{1+\xi}{2}\ell}.
\end{equation}

Let us now define the following functions:
\begin{equation}
\pi_\alpha(x)=\frac{\partial }{\partial x_\alpha}\phi(x) \mbox{ for }\alpha=1,\cdots,N.
\end{equation}

Then the function $\pi_\alpha$ can be described as
\begin{proposition}\label{pro1}
The function $\pi_\alpha$ satisfies the following estimates:
\begin{equation}\label{pialpha}
\|\pi_\alpha\|_*\leq Ce^{-\frac{1+\xi}{2}\ell},
\end{equation}
for some positive constant $C$ and $\xi$ independent of $\ell$ large.
 \end{proposition}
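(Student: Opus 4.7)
The plan is as follows. Since $u_\ell = U + \phi$ solves $\Delta u - u + |u|^{p-1}u = 0$, differentiating in $x_\alpha$ yields $L_{u_\ell}\partial_{x_\alpha} u_\ell = 0$, where $L_{u_\ell} := \Delta - 1 + p|u_\ell|^{p-1}$. Writing $\partial_{x_\alpha} u_\ell = \partial_{x_\alpha} U + \pi_\alpha$ and a direct computation (using that $\partial_{x_\alpha}(|U|^{p-1}U) = p|U|^{p-1}\partial_{x_\alpha} U$ almost everywhere) turns this identity into the linear equation
\begin{equation*}
L\pi_\alpha = -\partial_{x_\alpha} E - p\bigl(|u_\ell|^{p-1} - |U|^{p-1}\bigr)\,\partial_{x_\alpha} u_\ell =: G_\alpha,
\end{equation*}
where $L := \Delta - 1 + p|U|^{p-1}$ is the linearization around the approximate solution $U$. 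This is precisely the setting for which the linear theory was developed in \cite{MPW}.

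Next I would estimate $G_\alpha$ in the norm $\|\cdot\|_*$. The error $E$ is a sum of interaction terms, each localised at distance $O(\ell)$ from the bumps centred at the points of $\Pi$ and built out of the smooth exponentially decaying function $w$; differentiating once in $x_\alpha$ preserves this structure, so the same argument that produced $\|E\|_* \leq Ce^{-(1+\xi)\ell/2}$ in \cite{MPW} also gives $\|\partial_{x_\alpha} E\|_* \leq Ce^{-(1+\xi)\ell/2}$. For the quadratic term I would use the pointwise bound $\bigl||u_\ell|^{p-1} - |U|^{p-1}\bigr| \leq C(|U|^{p-2}+|\phi|^{p-2})|\phi|$, the exponential decay of $\partial_{x_\alpha} U$, and the given bound $\|\phi\|_* \leq Ce^{-(1+\xi)\ell/2}$, to conclude $\|G_\alpha\|_* \leq Ce^{-(1+\xi)\ell/2}$.

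The last step is to solve $L\pi_\alpha = G_\alpha$ in $\|\cdot\|_*$. Since $L$ has an approximate kernel, I would decompose
\begin{equation*}
\pi_\alpha = \pi_\alpha^{\perp} + \sum_{i,j,\beta} d^{(\alpha)}_{i,j,\beta}\,Z_{i,j,\beta}, \qquad Z_{i,j,\beta}(x) := \partial_{x_\beta} w\bigl(x - R_k^{i} y_j\bigr),
\end{equation*}
with $\pi_\alpha^\perp$ orthogonal to every $Z_{i,j,\beta}$ in the natural pairing. The invertibility result of \cite{MPW} then delivers $\|\pi_\alpha^\perp\|_* \leq C\|G_\alpha\|_*$, which is already the desired bound on the orthogonal piece. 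To bound the Lagrange-multiplier-like coefficients $d^{(\alpha)}_{i,j,\beta}$ I would test $L\pi_\alpha = G_\alpha$ against each $Z_{i',j',\beta'}$; using $LZ_{i,j,\beta} = O(e^{-\ell})$ together with the almost-orthogonality of $Z$'s centred at well-separated points of $\Pi$, this produces a linear system for the coefficients whose leading block is governed by the balancing identities $\Psi(\ell) = 2\sin(\pi/k)\Psi(\bar\ell)$ and $-(\log\Psi)'(s) = 1 + \tfrac{N-1}{2s} + O(s^{-2})$.

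The main obstacle will be this last coefficient analysis. Whereas $\phi$ in \cite{MPW} inherits the full $R_k$ and $\Gamma$ symmetries of the configuration, so that many modes $Z_{i,j,\beta}$ are automatically orthogonal to it, the derivative $\pi_\alpha$ only respects the $O(N-2)$-part of the symmetry: differentiating in $x_\alpha$ breaks the rotational invariance, and every mode in principle can contribute. Showing that the resulting reduced matrix is invertible with off-diagonal interactions small enough to be absorbed requires carrying out a circulant-type analysis in the indices $(i,j,\beta)$ analogous to, but somewhat more delicate than, the reduced equation solved in \cite{MPW}; this will be the technical core of the proof.
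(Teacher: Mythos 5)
The paper does not actually supply a proof of Proposition~\ref{pro1}; it is stated as a consequence of the construction in \cite{MPW}. The natural (and, I believe, intended) argument is considerably lighter than the one you sketch and avoids the Lyapunov--Schmidt machinery entirely. Recall that $\phi$ itself solves the elliptic equation
\begin{equation*}
\Delta\phi-\phi \;=\; -\,p|U|^{p-1}\phi - E - N(\phi) \;=:\; h .
\end{equation*}
The right-hand side $h$ is controlled in the $\|\cdot\|_*$ norm by $\|\phi\|_*$, $\|E\|_*$ and $\|N(\phi)\|_*$, all of which are of size $O(e^{-\frac{1+\xi}{2}\ell})$ by the estimates in \cite{MPW}. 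The weight $W(x)=\sum_{y\in\Pi}e^{\eta|x-y|}$ is slowly varying on unit balls (it changes by at most a factor $e^{|\eta|}$ across a ball of radius one), so interior $W^{2,q}$ estimates with $q>N$ (or local Schauder estimates) applied on balls $B_1(x_0)$ give
\begin{equation*}
|\nabla\phi(x_0)|\;\leq\; C\bigl(\|\phi\|_{L^\infty(B_1(x_0))}+\|h\|_{L^\infty(B_1(x_0))}\bigr)\;\leq\; C\,W(x_0)\bigl(\|\phi\|_*+\|h\|_*\bigr),
\end{equation*}
hence $\|\pi_\alpha\|_*\leq Ce^{-\frac{1+\xi}{2}\ell}$ immediately. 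No decomposition of $\pi_\alpha$, no projections, no circulant analysis.

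Your proposed route is not wrong in principle, but as written it has two problems. First, the identity $L\pi_\alpha=G_\alpha$ is circular: $G_\alpha$ contains $\partial_{x_\alpha}u_\ell=\partial_{x_\alpha}U+\pi_\alpha$, so you are only rearranging $L_{u_\ell}(\pi_\alpha)=-L_{u_\ell}(\partial_{x_\alpha}U)$; you would still need to absorb the residual $p(|u_\ell|^{p-1}-|U|^{p-1})\pi_\alpha$ into the operator or iterate. Second, and more seriously, the coefficient estimate you postpone to the end does not come for free out of testing the equation against the $Z$'s: the matrix $\int Z_{i',j',\beta'}LZ_{i,j,\beta}$ is of size $O(e^{-\ell})$, so inverting it would only give $\|d\|\lesssim e^{\ell}\|G_\alpha\|_*\sim e^{\frac{1-\xi}{2}\ell}$, which is far from the claimed bound. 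If you insist on this decomposition, the correct way to bound the coefficients is to integrate by parts directly: since $\phi$ is orthogonal (or almost orthogonal) to the bump derivatives, $\int \pi_\alpha Z_{i,j,\beta}=-\int\phi\,\partial_{x_\alpha}Z_{i,j,\beta}=O(\|\phi\|_*)$, which gives the needed $O(e^{-\frac{1+\xi}{2}\ell})$ bound on $d_{i,j,\beta}$ without ever inverting the (nearly singular) reduced matrix. This closes your argument, but only after a repair you did not flag; and either way, the elliptic-regularity proof is the shorter and more robust route.
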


Recall that problem (\ref{eq1}) is invariant under the two transformations mentioned in Section \ref{sec1}: translation, rotation. These invariance will be reflected in the element of the kernel of the linearized operator
\begin{equation}\label{defLLL}
L(\psi):=\Delta \psi-\psi+p|u_\ell|^{p-1}\psi
\end{equation}
which is the linearized equation associated to (\ref{eq1}) around $u_\ell$.

From now on, we will drop $\ell$ in $u_\ell$ for simplicity. Let us now introduce the following $3N-3$ functions:
\begin{equation}\label{translation}
z_\alpha(x)=\frac{\partial }{\partial x_\alpha}u(x), \mbox{ for }\alpha=1,\cdots,N,
\end{equation}
and
\begin{equation}\label{12rotation}
z_{N+1}(x)=x_1\frac{\partial }{\partial x_2}u(x)-x_2\frac{\partial }{\partial x_1}u(x).
\end{equation}
Furthermore, for $\alpha=3,\cdots,N$
\begin{equation}\label{13rotation}
z_{N+\alpha-1}(x)=x_1z_\alpha-x_\alpha z_1,\ z_{2N+\alpha-3}(x)=x_2z_\alpha-x_\alpha z_2.
\end{equation}
Observe that  the functions defined in (\ref{translation}) are related to the invariance of (\ref{eq1}) under translation, while the functions defined in (\ref{12rotation}) and (\ref{13rotation}) are related to the invariance of (\ref{eq1}) under the rotation in $(x_1,x_2)$ plane, $(x_1,x_\alpha)$ plane and $(x_2,x_\alpha)$ plane respectively.

 The invariance of problem (\ref{eq1}) under translation and rotation gives that the set $\tilde{Z}_u$ (introduced in (\ref{eq4}))associated to the linear operator $L$ introduced in (\ref{defLLL}) has dimension at least $3N-3$, since
 \begin{equation}
 L(z_\alpha)=0, \ \alpha=1,\cdots,3N-3.
 \end{equation}
 We will show that these functions are the only bounded elements of the kernel of the operator $L$.

 \setcounter{equation}{0}
\section{Scheme of the proof}\label{sec3}
Let $\varphi$ be a bounded function satisfying $L(\varphi)=0$, where $L$ is the linear operator defined by (\ref{eq3}). We write our function $\varphi$ as
\begin{equation}
\varphi(x)=\sum_{\alpha=1}^{3N-3}a_\alpha z_{\alpha}(x)+\tilde{\varphi}(x),
\end{equation}
where the functions $z_\alpha(x)$ are defined by (\ref{translation}), (\ref{12rotation}) and (\ref{13rotation}) respectively, while the constant $a_\alpha$ are chosen such that
\begin{equation}\label{ortho1}
\int z_\alpha \tilde{\varphi}=0, \alpha=1,\cdots,3N-3.
\end{equation}
Observe that $L(\tilde{\varphi})=0$. Our aim is to show that, if $\tilde{\varphi}$ is bounded, then $\tilde{\varphi}=0$. We introduce the following functions:

For $j=1,\cdots, m+1$, $i=0,\cdots,k-1$
\begin{equation*}
\tilde{Z}_{j,1}^i=R_k^i\cdot \nabla w(x-R_k^iy_j),
\end{equation*}
\begin{equation*} \tilde{Z}_{j,2}^i=R_k^{i,\perp}\cdot \nabla w(x-R_k^i y_j),
\end{equation*}
and
\begin{equation*}
\tilde{Z}_{j,\alpha}^i=\frac{\partial }{\partial x_{\alpha}}w(x-R_k^iy_j)
\end{equation*}
for $\alpha=3,\cdots,N$.

Moreover, we define for $j=m+2,\cdots, m+2n$, $i=0,\cdots,k-1$
\begin{equation*}
\tilde{Z}_{j,1}^i=(-1)^{j-m-1} {\bf t}_i\cdot \nabla w(x-R_k^iy_j),
\end{equation*}
\begin{equation*}
\tilde{Z}_{j,2}^i=(-1)^{j-m-1}{\bf n}_i\cdot \nabla w(x-R_k^iy_j),
\end{equation*}
and
\begin{equation*}
\tilde{Z}_{j,\alpha}^i=(-1)^{j-m-1}\frac{\partial }{\partial x_\alpha}w(x-R_k^iy_j)
\end{equation*}
for $\alpha=3,\cdots,N$,
where
\begin{equation*}
R_k^i=(\cos\frac{2\pi i}{k},\sin\frac{2\pi i}{k},0),
\end{equation*}
\begin{equation*}
 R_k^{i,\perp}=(\sin\frac{2\pi i}{k},-\cos\frac{2\pi i}{k},0),
\end{equation*}
\begin{equation*}
{\bf t}_i=(-\sin(\frac{2\pi i}{k}+\frac{\pi}{k}),\cos(\frac{2\pi i}{k}+\frac{\pi}{k}),0),
\end{equation*}
\begin{equation*}
 {\bf n}_i=(\cos(\frac{2\pi i}{k}+\frac{\pi}{k}), \sin(\frac{2\pi i}{k}+\frac{\pi }{k}),0).
\end{equation*}
In the following, we will denote by $\theta_i=\frac{2\pi i}{k}$. Then we can write that
\begin{eqnarray*}
z_1(x)&&=\pi_1+\frac{\partial U}{\partial x_1}\\
&&=\pi_1+\sum_{i=1}^{N-1}\Big(\sum_{j=1}^{m+1}(\cos\frac{2\pi i}{k}\tilde{Z}_{j,1}^i+\sin\frac{2\pi i}{k}\tilde{Z}_{j,2}^i)\\
&&-\sum_{j=m+2}^{2n+m}(\sin(\frac{2\pi i}{k}+\frac{\pi}{k})\tilde{Z}_{j,1}^i-\cos(\frac{2\pi i}{k}+\frac{\pi}{k})\tilde{Z}_{j,2}^i)\Big),
\end{eqnarray*}
\begin{eqnarray*}
z_2(x)&&=\pi_2+\frac{\partial U}{\partial x_2}\\
&&=\pi_2+\sum_{i=0}^{k-1}\Big(\sum_{j=1}^{m+1}(\sin\frac{2\pi i}{k}\tilde{Z}_{j,1}^i-\cos\frac{2\pi i}{k}\tilde{Z}_{j,2}^i)\\
&&+\sum_{j=m+2}^{2n+m}(\cos(\frac{2\pi i}{k}+\frac{\pi}{k})\tilde{Z}_{j,1}^i+\sin(\frac{2\pi i}{k}+\frac{\pi}{k})\tilde{Z}_{j,2}^i)\Big),
\end{eqnarray*}
and
\begin{equation*}
z_\alpha=\frac{\partial u}{\partial x_\alpha}=\pi_\alpha+\sum_{i=0}^{k-1}(\sum_{j=1}^{m+2n}\tilde{Z}_{j,\alpha}^i),
\end{equation*}
for $\alpha=3,\cdots,N$.
Furthermore
\begin{eqnarray*}
z_{N+1}&&=x_1z_2-x_2z_1\\
&&=x_1\pi_2-x_2\pi_1+x_1\frac{\partial U}{\partial x_2}-x_2\frac{\partial U}{\partial x_1}\\
&&=x_1\pi_2-x_2\pi_1+\sum_{i=0}^{k-1}\Big(\sum_{j=1}^{m+1}|y_j|(\cos \theta_i\frac{\partial }{\partial x_2}-\sin\theta_i\frac{\partial }{\partial x_1})w(x-R_k^iy_j)\\
&&+\sum_{j=m+2}^{2n+m}(R_k^iy_j\cdot{\bf n}_i\tilde{Z}_{j,1}^i-R_k^iy_j\cdot {\bf t}_i \tilde{Z}_{j,2}^i)
\Big)
\end{eqnarray*}
and for $\alpha=3,\cdots,N$,
\begin{eqnarray*}
z_{N+\alpha-1}&&=x_1\pi_\alpha-x_\alpha\pi_1+x_1\frac{\partial U}{\partial x_\alpha}-x_\alpha\frac{\partial U}{\partial x_1}\\
&&=x_1\pi_\alpha-x_\alpha\pi_1+\sum_{i=0}^{k-1}\Big(\sum_{j=1}^{m+1}|y_j|\cos \theta_i\tilde{Z}_{j,\alpha}^i\\
&&+\sum_{j=m+2}^{2n+m}(R_k^iy_j\cdot{\bf n}_i\cos(\theta_i+\frac{\pi}{k})-R_k^iy_j\cdot{\bf t}_i\sin(\theta_i+\frac{\pi}{k}))\tilde{Z}_{j,\alpha}^i\Big),
\end{eqnarray*}
and
\begin{eqnarray*}
z_{2N+\alpha-3}&&=x_2\pi_\alpha-x_\alpha\pi_2+x_2\frac{\partial U}{\partial x_\alpha}-x_\alpha\frac{\partial U}{\partial x_2}\\
&&=x_2\pi_\alpha-x_\alpha\pi_2+\sum_{i=0}^{k-1}\Big(\sum_{j=1}^{m+1}|y_j|\sin \theta_i\tilde{Z}_{j,\alpha}^i\\
&&+\sum_{j=m+2}^{2n+m}(R_k^iy_j\cdot{\bf n}_i\sin(\theta_i+\frac{\pi}{k})+R_k^iy_j\cdot{\bf t}_i\cos(\theta_i+\frac{\pi}{k}))\tilde{Z}_{j,\alpha}^i\Big).
\end{eqnarray*}

\noindent
Let us now define the following functions. For $i=0,\cdots,k-1$
\begin{equation}
Z_{1,1}^i=\cos\theta_i(\frac{\partial w(x-R_k^iy_1)}{\partial x_1}+\frac{\pi_1}{k})+\sin\theta_i(\frac{\partial w(x-R_k^iy_1)}{x_2}+\frac{\pi_2}{k}),
\end{equation}
and
\begin{equation}
Z_{1,2}^i=\sin\theta_i(\frac{\partial w(x-R_k^iy_1)}{\partial x_1}+\frac{\pi_1}{k})-\cos\theta_i(\frac{\partial w(x-R_k^iy_1)}{x_2}+\frac{\pi_2}{k}),
\end{equation}
and
\begin{equation}
Z_{1,\alpha}^i=\frac{\partial w(x-R_k^iy_1)}{x_\alpha}+\frac{\pi_\alpha}{k}
\end{equation}
for $\alpha=3,\cdots,N$.
Moreover, we define the following functions:
\begin{equation}
Z_{j,\alpha}^i=\tilde{Z}_{j,\alpha}^i \mbox{ for } i=0,\cdots,k-1, j=2,\cdots,2n+m, \alpha=1,\cdots,N.
\end{equation}

\medskip

For the simplicity of notations, we first introduce the following vectors:
\begin{equation*}
Z_{{\bf v},\alpha}=\left(\begin{array}{c}
Z_{1,\alpha}^0\\
\vdots\\
Z_{1,\alpha}^{k-1}\\
Z_{m+1,\alpha}^0\\
\vdots\\
Z_{m+1,\alpha}^{k-1}
\end{array}
\right) , \ Z_{Y_1,\alpha}^i=\left(\begin{array}{c}
Z_{2,\alpha}^i\\
\vdots\\
Z_{m,\alpha}^{i}
\end{array}
\right), \  Z_{Y_2,\alpha}^i=\left(\begin{array}{c}
Z_{{m+2},\alpha}^i\\
\vdots\\
Z_{{2n+m},\alpha}^i
\end{array}
\right),
\end{equation*}
and
\begin{equation*}
{\bf Z}_\alpha=\left(\begin{array}{c}
Z_{{\bf v},\alpha}\\
Z_{Y_1,\alpha}^0\\
\vdots\\
Z_{Y_1,\alpha}^{k-1}\\
Z_{Y_2,\alpha}^0\\
\vdots\\
Z_{Y_2,\alpha}^{k-1}
\end{array}
\right), \ {\bf Z}=\left(\begin{array}{c}
{\bf Z}_1\\
\vdots\\
{\bf Z}_N
\end{array}
\right).
\end{equation*}

With these notations in mind, we write our function $\tilde{\varphi}$ as
\begin{equation*}
\tilde{\varphi}=\sum_{\alpha=1}^N{\bf c}_\alpha\cdot {\bf Z}_\alpha+\varphi^\perp(x)
\end{equation*}
where ${\bf c}_\alpha=\left(\begin{array}{c}c_{\bf{v},\alpha}\\ c_{Y_1,\alpha}^0\\ \vdots\\ c_{Y_1,\alpha}^{k-1}\\ c_{Y_2,\alpha}^0\\ \vdots\\ c_{Y_2,\alpha}^{k-1}
\end{array}\right)=\left(\begin{array}{c}c_{1,\alpha}\\ \vdots\\ c_{(m+2n)\times k,\alpha} \end{array}\right)$, $\alpha=1,\cdots,N$ are $N$ vectors in $\R^{(m+2n)\times k}$  defined so that
\begin{equation}
\int Z_{j,\alpha}^i\varphi^\perp=0 \mbox{ for \ all } \alpha=1,\cdots,N, \ j=1,\cdots,m+2n.\end{equation}
Observe that
\begin{equation*}
{\bf c}_\alpha=0 \mbox{ for \ all } \alpha, \mbox{ and }\varphi^\perp=0
\end{equation*}
implies $\tilde{\varphi}=0$. Hence our purpose is to show that all vectors ${\bf c}_\alpha$ and $\varphi^\perp$ are zero. This will be consequence of the following three facts.

\medskip
\noindent
{\bf Fact 1}.   The orthogonality condition (\ref{ortho1}) takes the form
\begin{equation}\label{fact1}
\sum_{\alpha=1}^N{\bf c}_\alpha \cdot \int {\bf Z}_\alpha z_\beta=-\int \varphi^\perp z_\beta
\end{equation}
for $\beta=1,\cdots,3N-3$.

Let us now introduce the following vectors:
\begin{equation*}
{\bf cos}_k=\left(\begin{array}{c}
\cos\theta_0\\
\vdots\\
\cos\theta_{k-1}
\end{array}
\right),\ {\bf sin}_k=\left(\begin{array}{c}
\sin\theta_0\\
\vdots\\
\sin\theta_{k-1}
\end{array}
\right),
\end{equation*}
are two $k$-dimensional vectors and
\begin{equation*}
 {\bf cos (\theta_i)}=\left(\begin{array}{c}
\cos\theta_i\\
\vdots\\
\cos\theta_i
\end{array}
\right),\ \ {\bf sin (\theta_i)}=\left(\begin{array}{c}
\sin\theta_i\\
\vdots\\
\sin\theta_i
\end{array}
\right),
\end{equation*}
are two $(m-1)$-dimensional vectors and
\begin{equation*}
\ {\bf cos (\theta_i+\frac{\pi}{k})}=\left(\begin{array}{c}
\cos(\theta_i+\frac{\pi}{k})\\
\vdots\\
\cos(\theta_i+\frac{\pi}{k})
\end{array}
\right),\ \ {\bf sin (\theta_i+\frac{\pi}{k})}=\left(\begin{array}{c}
\sin(\theta_i+\frac{\pi}{k})\\
\vdots\\
\sin(\theta_i+\frac{\pi}{k})
\end{array}
\right).
\end{equation*}
are two $(2n-1)$-dimensional vectors.

\medskip
\noindent
We have the validity of the following
\begin{proposition}\label{pro301}
The system (\ref{fact1}) reduces to the following $3N-3$ linear conditions on the vectors ${\bf c}_\alpha$:
\begin{equation}\label{eq301}
{\bf c}_1\cdot \left(\begin{array}{c}
{\bf cos}_k\\
{\bf cos}_k\\
{\bf cos(\theta_0)}\\
\vdots\\
{\bf cos(\theta_{k-1})}\\
-{\bf sin(\theta_0+\frac{\pi}{k}) }\\
\vdots\\
-{\bf sin(\theta_0+\frac{\pi}{k})}
\end{array}
\right)+{\bf c}_2\cdot \left(\begin{array}{c}
{\bf sin}_k\\
{\bf sin}_k\\
{\bf sin(\theta_0)}\\
\vdots\\
{\bf sin(\theta_{k-1})}\\
{\bf cos(\theta_0+\frac{\pi}{k}) }\\
\vdots\\
{\bf cos(\theta_0+\frac{\pi}{k})}
\end{array}
\right)=t_1+O(e^{-\frac{(1+\xi)\ell}{2}})\mathcal{L}_1\left(\begin{array}{c}{\bf c}_1\\ \vdots\\ {\bf c}_N
\end{array}\right)
\end{equation}
\begin{equation}\label{eq302}
{\bf c}_1\cdot \left(\begin{array}{c}
{\bf sin}_k\\
{\bf sin}_k\\
{\bf sin(\theta_0)}\\
\vdots\\
{\bf sin(\theta_{k-1})}\\
{\bf cos(\theta_0+\frac{\pi}{k}) }\\
\vdots\\
{\bf cos(\theta_0+\frac{\pi}{k})}
\end{array}
\right)+{\bf c}_2\cdot \left(\begin{array}{c}
-{\bf cos}_k\\
-{\bf cos}_k\\
-{\bf cos(\theta_0)}\\
\vdots\\
-{\bf cos(\theta_{k-1})}\\
{\bf sin(\theta_0+\frac{\pi}{k}) }\\
\vdots\\
{\bf sin(\theta_0+\frac{\pi}{k})}
\end{array}
\right)=t_2+O(e^{-\frac{(1+\xi)\ell}{2}})\mathcal{L}_2\left(\begin{array}{c}{\bf c}_1\\ \vdots\\ {\bf c}_N
\end{array}\right)
\end{equation}
and for $\alpha=3,\cdots,N$,
\begin{equation}
{\bf c}_\alpha\cdot \left(\begin{array}{c}
{\bf 1}_k\\
{\bf 1}_k\\
{\bf 1}_{m-1}\\
\vdots\\
{\bf 1}_{m-1}\\
{\bf 1}_{2n-1}\\
\vdots\\
{\bf 1}_{2n-1}
\end{array}
\right)=t_\alpha+O(e^{-\frac{(1+\xi)\ell}{2}})\mathcal{L}_\alpha\left(\begin{array}{c}{\bf c}_1\\ \vdots\\ {\bf c}_N
\end{array}\right)
\end{equation}
\begin{equation}\label{eq303}
{\bf c}_1\cdot\left(\begin{array}{c}
{\bf 0}_k\\
{\bf 0}_k\\
{\bf R_k^0y_j\cdot R_k^{0,\perp}}\\
\vdots\\
{\bf R_k^{k-1}y_j\cdot R_k^{k-1,\perp}}\\
{\bf R_k^0z_h\cdot n_0}\\
\vdots\\
{\bf R_k^{k-1}z_h\cdot n_{k-1}}
\end{array}
\right)-{\bf c}_2\cdot\left(\begin{array}{c}
{\bf |y_1|}_k\\
{\bf |y_{m+1}|}_k\\
{\bf R_k^0y_j\cdot R_k^0}\\
\vdots\\
{\bf R_k^{k-1}y_j\cdot R_k^{k-1}}\\
{\bf R_k^0z_h\cdot t_0}\\
\vdots\\
{\bf R_k^{k-1}z_h\cdot t_{k-1}}
\end{array}
\right)=t_{N+1}+O(e^{-\frac{(1+\xi)\ell}{2}})\mathcal{L}_{N+1}\left(\begin{array}{c}{\bf c}_1\\ \vdots\\ {\bf c}_N
\end{array}\right)
\end{equation}
and for $\alpha=3,\cdots,N$
\begin{equation}\label{eq304}
{\bf c}_\alpha\cdot \left(\begin{array}{c}
|y_1|{\bf cos}_k\\
|y_{m+1}|{\bf cos}_k\\
{\bf R_k^0y_j\cdot e_1}\\
\vdots\\
{\bf R_k^{k-1}y_j\cdot e_1}\\
{\bf R_k^0z_h\cdot e_1}\\
\vdots\\
{\bf R_k^{k-1}z_h\cdot e_1}
\end{array}
\right)=t_{N+\alpha-1}+O(e^{-\frac{(1+\xi)\ell}{2}})\mathcal{L}_{N+\alpha-1}\left(\begin{array}{c}{\bf c}_1\\ \vdots\\ {\bf c}_N
\end{array}\right)
\end{equation}
\begin{equation}\label{eq305}
{\bf c}_\alpha\cdot \left(\begin{array}{c}
|y_1|{\bf sin}_k\\
|y_{m+1|}{\bf sin}_k\\
{\bf R_k^0y_j\cdot e_2}\\
\vdots\\
{\bf R_k^{k-1}y_j\cdot e_2}\\
{\bf R_k^0z_h\cdot e_2}\\
\vdots\\
{\bf R_k^{k-1}z_h\cdot e_2}
\end{array}
\right)=t_{2N+\alpha-3}+O(e^{-\frac{(1+\xi)\ell}{2}})\mathcal{L}_{2N+\alpha-3}\left(\begin{array}{c}{\bf c}_1\\ \vdots\\ {\bf c}_N
\end{array}\right).
\end{equation}
In the above expansions, $\left(\begin{array}{c}t_1\\ \vdots\\
t_{3N-3}\end{array}\right)$ is a fixed vector with
\begin{equation*}
\|\left(\begin{array}{c}t_1\\ \vdots\\
t_{3N-3}\end{array}\right)\|\leq \ell^\tau\|\varphi^\perp\|_*
\end{equation*}
for some positive constant $\tau$ independent of $\ell $ large and $\mathcal{L}_i: \R^{(2n+m)\times k}\to \R$ are linear functions whose coefficients are constants uniformly bounded as $\ell\to \infty$.
\end{proposition}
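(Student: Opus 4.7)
The plan is to compute each integral $\int Z_{j,\alpha}^i z_\beta$ and contract with the coefficient vectors ${\bf c}_\alpha$. The key reduction is that each $z_\beta$ can be written as an explicit linear combination of the adjusted basis $\{Z_{j,\alpha}^i\}$, up to a small residual. For the translations $\beta=1,\ldots,N$, the three identities
\begin{equation*}
\sum_{i=0}^{k-1}\bigl[\cos\theta_i\,Z_{1,1}^i+\sin\theta_i\,Z_{1,2}^i\bigr]=\pi_1+\sum_{i=0}^{k-1}\bigl[\cos\theta_i\,\tilde{Z}_{1,1}^i+\sin\theta_i\,\tilde{Z}_{1,2}^i\bigr],
\end{equation*}
together with its companion absorbing $\pi_2$ and the identity $\sum_i Z_{1,\alpha}^i=\pi_\alpha+\sum_i \tilde{Z}_{1,\alpha}^i$ for $\alpha\ge 3$ (all immediate from $\sum_i\cos^2\theta_i=\sum_i\sin^2\theta_i=k/2$ and $\sum_i\cos\theta_i\sin\theta_i=0$), absorb the $\pi_\beta$ residual into the $Z$-basis expansion. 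Consequently $z_\beta$ becomes an exact linear combination of $\{Z_{j,\alpha}^i\}$ with coefficients matching precisely the entries of the LHS vectors in \equ{eq301}, \equ{eq302} and the analogue for $\alpha\ge 3$.

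Second I would establish the near-orthogonality
\begin{equation*}
\int Z_{j,\alpha}^i Z_{j',\alpha'}^{i'}\,dx=\gamma_0\,\delta_{(j,i,\alpha),(j',i',\alpha')}+O\bigl(e^{-(1+\xi)\ell/2}\bigr),\qquad \gamma_0:=\int(\partial_{x_1}w)^2.
\end{equation*}
This follows from: same-point orthogonality $\int\partial_\alpha w\,\partial_{\alpha'}w=0$ for $\alpha\neq\alpha'$ (an immediate consequence of the radial symmetry of $w$); the standard bound $\int \partial_a w(x)\,\partial_b w(x-q)\,dx=O(e^{-|q|})$ combined with $|R_k^iy_j-R_k^{i'}y_{j'}|\gtrsim \ell$; and the $\pi_\alpha/k$ contributions inside $Z_{1,\alpha}^i$, which have $\|\cdot\|_*$-norm of size $e^{-(1+\xi)\ell/2}$ by Proposition \ref{pro1}. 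Substituting the decomposition of $z_\beta$ and contracting with ${\bf c}_\alpha$ then reduces $\sum_\alpha {\bf c}_\alpha\cdot\int {\bf Z}_\alpha z_\beta$ to $\gamma_0$ times the leading linear form on the LHS of the proposition, modulo a linear form of size $e^{-(1+\xi)\ell/2}$; dividing by $\gamma_0$ (absorbed into $t_\beta$ and $\mathcal{L}_\beta$) yields the stated equations.

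For the rotation generators $\beta=N+1,\ldots,3N-3$ the same scheme applies once one uses the identity $(x_a\partial_b-x_b\partial_a)w(x-p)=(p_a\partial_b-p_b\partial_a)w(x-p)$, valid because $w$ is radial. This replaces $x$ by the base point $p=R_k^iy_j$ and produces exactly the geometric weights $|y_j|$, $R_k^iy_j\cdot{\bf n}_i$, $R_k^iy_j\cdot{\bf t}_i$, $R_k^iy_j\cdot R_k^i$, $R_k^iy_j\cdot R_k^{i,\perp}$ that appear in the LHS vectors of \equ{eq303}--\equ{eq305}. The residuals $x_a\pi_b-x_b\pi_a$ cannot be absorbed into the $Z$-basis, but their contribution to $\int Z_{j,\alpha}^i z_\beta$ is bounded by $C\ell\|\pi_\alpha\|_*=O(\ell e^{-(1+\xi)\ell/2})$, which is swallowed by $O(e^{-(1+\xi')\ell/2})$ for any $\xi'<\xi$. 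The same polynomial factor yields the RHS bound $|\int \varphi^\perp z_\beta|\le \ell^\tau\|\varphi^\perp\|_*$, since on the support of the configuration-centered weight one has $|z_\beta(x)|\lesssim \ell \sum_{y\in\Pi}e^{\eta|x-y|/2}$.

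The main obstacle is the rotation case: verifying through systematic bookkeeping that the coefficients extracted from $(x_a\partial_b-x_b\partial_a)u$ reproduce precisely the vectors listed in \equ{eq303}--\equ{eq305}. One must track the alternating sign $(-1)^{j-m-1}$ on the outer polygon, the two orthonormal frames $(R_k^i,R_k^{i,\perp})$ on the inner polygon versus $({\bf t}_i,{\bf n}_i)$ on the outer polygon, and the explicit positions of the intermediate vertices $y_j$ (aligned with ${\bf e}_1$) and $z_h$ (displaced along ${\bf t}$). Once this accounting is carried out, the proposition follows directly from the two ingredients above.
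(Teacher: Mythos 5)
Your proposal is correct and follows essentially the same route as the paper's own proof. The paper computes each pairing $\int Z_{j,\alpha}^i z_\beta$ directly by localizing near the vertex $R_k^iy_j$ and isolating the dominant projection coefficient, while you instead first write $z_\beta$ exactly (for translations) or up to a controlled residual (for rotations) as a combination of the $Z_{j,\alpha}^i$ and then contract against the near-orthogonality of that family; these are the same two ingredients bookkept in a slightly different order, and both yield the stated vectors and error bounds once the polynomial-in-$\ell$ factors from the number of spikes and the growth of $|R_k^iy_j|$ are absorbed into $\ell^\tau$ and into $\xi\mapsto\xi'<\xi$.
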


\begin{proof}
Let us consider (\ref{fact1}) with $\beta=1$, that is

\begin{equation}
\sum_{\alpha=1}^N {\bf c}_\alpha \cdot \int {\bf Z}_\alpha z_1=-\int \varphi^\perp z_1.
\end{equation}

First we write $t_1=-\frac{\int \varphi^\perp z_1}{\int (\frac{\partial w(x)}{\partial x_1})^2}$. A straightforward computation gives that $|t_1|\leq C(2n+m)\times k\|\varphi^\perp\|_*\leq \ell^\tau \|\varphi^\perp\|_*$ for a certain constant $\tau$ independent of $\ell$ large. Second, by direct computation, we have for $j=1,\cdots,m+1$
\begin{eqnarray*}
\int Z_{j,1}^iz_1&&=\int \frac{\partial u}{\partial x_1}R_k^i\cdot \nabla w(x-R_k^iy_j)dx
+O(e^{-\frac{1+\xi}{2}\ell})\\
&&=\int_{B_{\frac{\ell}{2}}(R_k^iy_j)}\frac{\partial u}{\partial x_1}R_k^i\cdot \nabla w(x-R_k^iy_j)dx\\
&&+\int_{\R^N-B_{\frac{\ell}{2}}(R_k^iy_j)}\frac{\partial u}{\partial x_1}R_k^i\cdot \nabla w(x-R_k^iy_j)dx+O(e^{-\frac{1+\xi}{2}\ell})\\
&&=\cos\theta_i\int (\frac{\partial w}{\partial x_1})^2dx+O(e^{-\frac{(1+\xi)\ell}{2}}).
\end{eqnarray*}
Similarly, one can get that
\begin{eqnarray*}
\int Z_{j,2}^iz_1&&=\int \frac{\partial u}{\partial x_1}R_k^{i,\perp}\cdot \nabla w(x-R_k^iy_j)dx+O(e^{-\frac{1+\xi}{2}\ell})\\
&&=\sin\theta_i\int (\frac{\partial w}{\partial x_1})^2dx+O(e^{-\frac{(1+\xi)\ell}{2}}),
\end{eqnarray*}
and for $\alpha=3,\cdots,N$
\begin{eqnarray*}
\int Z_{j,\alpha}^iz_1=\int \frac{\partial u}{\partial x_1}\frac{\partial}{\partial x_\alpha}w(x-R_k^iy_j)=0
\end{eqnarray*}
by the evenness of $u$ in $x_\alpha$.

Moreover, we have for $j=m+2,\cdots,m+2n$,
\begin{equation*}
\int Z_{j,1}^iz_1=-\sin(\theta_i+\frac{\pi}{k})\int(\frac{\partial w}{\partial x_1})^2+O(e^{-\frac{(1+\xi)\ell}{2}}),
\end{equation*}
\begin{equation*}
\int Z_{j,2}^iz_1=\cos(\theta_i+\frac{\pi}{k})\int(\frac{\partial w}{\partial x_1})^2+O(e^{-\frac{(1+\xi)\ell}{2}}),
\end{equation*}
and
\begin{equation*}
\int Z_{j,\alpha}^iz_1=0
\end{equation*}
for $\alpha=3,\cdots,N$.

Direct consequence of the above calculation is that
\begin{eqnarray*}
\sum_{\alpha=1}^N{\bf c}_\alpha\cdot \int {\bf Z}_\alpha z_1&&={\bf c_1}\cdot \left(\begin{array}{c}
{\bf cos}_k\\
{\bf cos}_k\\
{\bf cos(\theta_0)}\\
\vdots\\
{\bf cos(\theta_{k-1})}\\
-{\bf sin(\theta_0+\frac{\pi}{k}) }\\
\vdots\\
-{\bf sin(\theta_0+\frac{\pi}{k})}
\end{array}
\right)+{\bf c}_2\cdot \left(\begin{array}{c}
{\bf sin}_k\\
{\bf sin}_k\\
{\bf sin(\theta_0)}\\
\vdots\\
{\bf sin(\theta_{k-1})}\\
{\bf cos(\theta_0+\frac{\pi}{k}) }\\
\vdots\\
{\bf cos(\theta_0+\frac{\pi}{k})}
\end{array}
\right)\\
&&+O(e^{-\frac{(1+\xi)\ell}{2}})\mathcal{L}\left(\begin{array}{c}{\bf c}_1\\ \vdots\\ {\bf c}_N
\end{array}\right)
\end{eqnarray*}
where $\mathcal{L}$ is linear function, whose coefficients are uniformly bounded in $\ell$ as $\ell$ to $\infty$. Thus (\ref{eq301}) follows easily. The proof of (\ref{eq302}) to (\ref{eq305}) are similar and we leave it to the reader.
\end{proof}

\medskip
\noindent
{\bf Fact 2:} Since $L(\tilde{\varphi})=0$, we have that
\begin{equation}\label{eq309}
L(\varphi^\perp)=-\sum_{\alpha=1}^N {\bf c}_\alpha \cdot L({\bf Z}_\alpha).
\end{equation}

We first have the following a priori estimate on the above equation:
\begin{proposition}\label{pro302}
Assume $h$ is a function such that $\|h\|_*<\infty$ where the norm $\|\cdot\|_*$ is defined in (\ref{norm}). Let $\phi$ be solution of the following equation
\begin{equation}\label{eq306}
L(\phi)=h, \ \int \phi Z_{j,\alpha}^i=0 \mbox{ for } i=0,\cdots,k-1,j=1,\cdots,m+2n,\alpha=1,\cdots,N.
\end{equation}
Then for $\ell$ large, $\phi$ must satisfy the following estimate:
\begin{equation}
\|\phi\|_*\leq C\|h\|_*.
\end{equation}
\end{proposition}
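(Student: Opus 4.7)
The natural approach is a contradiction/blow-up argument, splitting $\R^N$ into small balls around the bubble centers and their complement. Suppose the estimate fails: there is a sequence $\ell_n \to \infty$ with solutions $\phi_n$ of \eqref{eq306} and data $h_n$ satisfying $\|\phi_n\|_* = 1$ and $\|h_n\|_* \to 0$. Denote $\Pi_n = \Pi$ at parameter $\ell_n$ and $W_n(x) := \sum_{y\in\Pi_n} e^{\eta|x-y|}$, so that $|\phi_n(x)| \leq W_n(x)$ and $|h_n(x)| = o(1)\,W_n(x)$ uniformly.

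First I would establish exterior control via a supersolution. A direct computation gives
\[
(-\Delta + 1)\,e^{\eta|x-y|} \;=\; \Big(1-\eta^2 - \tfrac{\eta(N-1)}{|x-y|}\Big)e^{\eta|x-y|},
\]
which, because $-1 < \eta < 0$, is bounded below by $c_0\,e^{\eta|x-y|}$ for some $c_0>0$ outside a fixed ball of radius $R_0$. Since the potential $p|u_{\ell_n}|^{p-1}$ decays exponentially off the union of balls $\Pi_n^{R_0} := \bigcup_{y\in\Pi_n} B_{R_0}(y)$, summing these elementary supersolutions shows $W_n$ is itself a supersolution for $-L$ on $\R^N \setminus \Pi_n^{R_0}$. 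The maximum principle then yields
\[
\|\phi_n\|_* \;\leq\; C\Big( \sup_{x\in \Pi_n^{R_0}} \tfrac{|\phi_n(x)|}{W_n(x)} + \|h_n\|_* \Big),
\]
and the normalization forces a point $x_n \in B_{R_0}(R_k^{i_n}y_{j_n})$, for some indices $(i_n,j_n)$, at which $|\phi_n(x_n)|/W_n(x_n) \geq 1/2$.

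Next I would perform the blow-up. Set $\tilde\phi_n(x) := \phi_n(x + R_k^{i_n}y_{j_n})$. Because pairwise distances between bubble centers diverge and, up to a subsequence, $(i_n,j_n)$ may be assumed to correspond to a fixed geometric type (vertex of the inner polygon, generic inner-edge point, or generic outer-edge point), the translated potential $p|u_{\ell_n}|^{p-1}(\cdot + R_k^{i_n}y_{j_n})$ converges locally uniformly to $pw^{p-1}$. Combined with $|\tilde\phi_n| \leq C$ on compact sets and $\|L\tilde\phi_n\|_{L^\infty_{\mathrm{loc}}} \to 0$, elliptic regularity produces a $C^1_{\mathrm{loc}}$ subsequential limit $\phi_\infty$, bounded on $\R^N$, solving $\Delta\phi_\infty - \phi_\infty + pw^{p-1}\phi_\infty = 0$. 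Nondegeneracy of $w$ gives $\phi_\infty = \sum_{\alpha=1}^N a_\alpha \partial_{x_\alpha} w$. The orthogonality $\int \phi_n Z_{j_n,\alpha}^{i_n} = 0$ passes to the limit: using Proposition~\ref{pro1} to absorb the $\pi_\alpha/k$ correction and the orthogonal rotation $R_k^{i_n}$ (or the frame $\mathbf{t}_{i_n}, \mathbf{n}_{i_n}$ in the outer case) to identify $Z_{j_n,\alpha}^{i_n}$ with $\partial_{x_\alpha} w(\cdot - R_k^{i_n}y_{j_n})$, this forces $a_\alpha \int_{\R^N} (\partial_{x_\alpha} w)^2 = 0$, hence $\phi_\infty \equiv 0$. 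This contradicts $|\tilde\phi_n(x_n - R_k^{i_n}y_{j_n})| \geq c > 0$, closing the argument.

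The main obstacle I anticipate is keeping all constants uniform in $\ell$, since the cardinality $|\Pi_n| = (m+2n)k$ grows like $\ell$. This is precisely what the multicenter weight $W_n$ is designed to handle: the technical task is verifying that the cross contributions from different centers do not spoil the supersolution property of $W_n$, and that boundary values across all of $\partial \Pi_n^{R_0}$ are bookkept correctly by a single application of the maximum principle. The constraint $-1 < \eta$ supplies the crucial $1-\eta^2 > 0$ margin needed to dominate the exponentially small remainder from $p|u_{\ell_n}|^{p-1}$ on the exterior region, while $\eta<0$ ensures exponential decay away from each center so the geometric sum defining $W_n$ behaves well despite the increasing number of terms.
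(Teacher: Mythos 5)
Your proposal is correct and follows the same contradiction/blow-up strategy as the paper: normalize $\|\phi_n\|_*=1$, use the multicenter exponential weight as a supersolution to localize mass near a bubble center, translate and pass to a limit solving the linearized problem at $w$, and invoke nondegeneracy plus the inherited orthogonality to get $\phi_\infty\equiv 0$, a contradiction. The only difference is that you spell out the barrier/maximum-principle step in detail, whereas the paper delegates it to Proposition 3.1 of \cite{MPW}.
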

\begin{proof}
This can be proved by contradiction. Assume that there exists $\ell_n\to \infty$, and corresponding $\phi_n, h_n$ to (\ref{eq306}), such that
\begin{equation}
\|\phi_n\|_*=1, \ \|h_n\|_*\to 0 \mbox{ as }n\to \infty.
\end{equation}

In the following, we omit the index $n$ without confusion. Following the argument in Proposition 3.1 in \cite{MPW}, one can get that there exists $R_k^iy_j$ such that
\begin{equation}\label{eq307}
\|\phi\|_{L^\infty(B(R_k^iy_j,\rho))}\geq C,
\end{equation}
for some fixed $C$ and $\rho$ large. Using elliptic estimates together with Ascoli-Arzela theorem, we can find a sequence  of $R_k^iy_j$ and $\phi(x+R_k^iy_j)$ converges to $\phi_\infty$ which is a solutions of
\begin{equation*}
\Delta \phi_\infty-\phi_\infty+pw^{p-1}\phi_\infty=0
\end{equation*}
and satisfies the following orthogonal conditions:
\begin{equation*}
\int \phi_\infty \frac{\partial w}{\partial x_\alpha}=0 , \ \alpha=1,\cdots,N.
\end{equation*}
Thus $\phi_\infty=0$, this is a contradiction with (\ref{eq307}). This completes the proof.
\end{proof}

Since
\begin{equation*}
L(Z_{j,\alpha}^i)=p(|u|^{p-1}-w^{p-1}(x-R_k^iy_j))Z_{j,\alpha}^i+O(e^{-\frac{1+\xi}{2}\ell}),
\end{equation*}
one can easily get that
\begin{equation}\label{eq308}
\|L(Z_{j,\alpha}^i)\|_*\leq Ce^{-\frac{1+\xi}{2}\ell}
\end{equation}
for some $\xi$ independent of $\ell$ large where we have used the estimate (\ref{pialpha}).

So from Proposition \ref{pro302}, and estimate (\ref{eq308}), we get that
\begin{equation}\label{varphi}
\|\varphi^\perp\|_*\leq Ce^{-\frac{1+\xi}{2}\ell}\sum_{\alpha=1}^N\|{\bf c}_\alpha\|.
\end{equation}

\medskip
\noindent
{\bf Fact 3:}
Let us now multiply (\ref{eq309}) against $Z_{j,\alpha}^i$ for $i=0,\cdots,k-1$, $j=1,\cdots, m+2n$ and $\alpha=1,\cdots,N$. After integrating in $\R^N$, we get a linear system of $(2n+m)\times k$ equations in the $(2n+m)\times k$ coefficients ${\bf c}$ of the form
\begin{equation}\label{eq313}
M\left(\begin{array}{c}
{\bf c}_1\\
\vdots\\
{\bf c}_N
\end{array}\right)=-\left(\begin{array}{c}
{\bf r}_1\\
\vdots\\
{\bf r}_N
\end{array}
\right) \mbox{ with } {\bf r}_\alpha=\left(\begin{array}{c}
\int L(\varphi^\perp)Z_{\bf{v},\alpha}\\
\int L(\varphi^\perp) Z_{Y_1,\alpha}^0\\
\vdots\\
\int L(\varphi^\perp)Z_{Y_1,\alpha}^{k-1}\\
\int L(\varphi^\perp)Z_{Y_2,\alpha}^0\\
\vdots\\
\int L(\varphi^\perp)Z_{Y_2,\alpha}^{k-1}
\end{array}
\right)
\end{equation}

Next let us analysis the matrix $M$. A first observation is that, if $\alpha$ is any of the indices $\{1,2\}$ and $\beta$ is any of the index in $\{3,\cdots,N\}$,
\begin{equation*}
\int L(Z_{i,\beta}^t)Z_{j,\alpha}^s=0 \mbox{ for \ any } i,j=1,\cdots, 2n+m,\  s,t=0,\cdots,k-1.
\end{equation*}
This fact implies that the matrix $M$ has the form
\begin{equation}\label{eq310}
M=\left(\begin{array}{cc}
M_1&0\\
0&M_2
\end{array}
\right)
\end{equation}
where $M_1$ is a matrix of dimension $(2\times (2n+m))^2$ and $M_2$ is a matrix of dimension $((N-2)\times(2n+m))^2$.

Since
\begin{equation*}
\int L(Z_{i,\alpha}^s)Z_{j,\beta}^t=\int L(Z_{j,\beta}^t)Z_{i,\alpha}^s,
\end{equation*}
we can write that
\begin{equation}\label{eq311}
M_1=\left(\begin{array}{cc}
A&B\\
B^t&C
\end{array}
\right)
\end{equation}
where $A,B,C$ are square matrix of dimension $((2n+m)\times k)^2$, with $A,C$ symmetric. More precisely
\begin{eqnarray*}
&&A=(\int L(Z_{i,1}^s)Z_{j,1}^t)_{i,j=1,\cdots,2n+m,\ s,t=0,\cdots,k-1},\\
&&B=(\int L(Z_{i,1}^s)Z_{j,2}^t)_{i,j=1,\cdots,2n+m,\ s,t=0,\cdots,k-1},\\
&&C=(\int L(Z_{i,2}^s)Z_{j,2}^t)_{i,j=1,\cdots,2n+m,\ s,t=0,\cdots,k-1}.
\end{eqnarray*}

Furthermore, by symmetry again, since
\begin{equation*}
\int L(Z_{i,\alpha}^s)Z_{j,\beta}^t=0, \mbox{ if }\alpha\neq \beta, \alpha,\beta=3,\cdots,N,
\end{equation*}
the matrix $M_2$ has the form
\begin{equation}\label{eq312}
M_2=\left(\begin{array}{ccccc}
H_3&0&0&0&0\\
0&H_4&0&0&0\\
0&\ddots&\ddots&\ddots&0\\
0&\hdots&0&0&H_N
\end{array}
\right)
\end{equation}
where $H_\alpha$ are square matrices of dimension $(2n+m)^2$, and each of them is symmetric. The matrix $H_\alpha$ are defined by
\begin{equation}
H_\alpha=(\int L(Z_{i,\alpha}^s)Z_{j,\alpha}^t)_{i,j=1,\cdots,2n+m} \mbox{ for }\alpha=3,\cdots,N.
\end{equation}
Thus, given the form of the matrix $M$ as described in (\ref{eq310}), (\ref{eq311}) , and (\ref{eq312}),  system (\ref{eq313}) is equivalent to
\begin{equation*}
M_1\left(\begin{array}{c}
{\bf c}_1\\
{\bf c}_2
\end{array}
\right)=-\left(\begin{array}{c}
{\bf r}_1\\
{\bf r}_2
\end{array}
\right), \ H_\alpha{\bf c}_\alpha=-{\bf r}_\alpha, \mbox{ for }\alpha=3,\cdots,N.
\end{equation*}
where the vectors ${\bf r}_\alpha$ are defined in (\ref{eq313}).

We will show that the above system is solvable. Indeed we have the validity of the following:
\begin{proposition}\label{pro303}
There exists $\ell_0>0$ and $C$, such that for $\ell>\ell_0$, system (\ref{eq313}) is solvable. Furthermore, the solution has the form
\begin{eqnarray*}
&\left(\begin{array}{c}
{\bf c}_1\\
{\bf c}_2
\end{array}
\right)=\left(\begin{array}{c}
{\bf v}_1\\
{\bf v}_2
\end{array}
\right)\\
&+s_1\left(\begin{array}{c}
{\bf cos}_k\\
{\bf cos}_k\\
{\bf cos(\theta_0)}\\
\vdots\\
{\bf cos(\theta_{k-1})}\\
-{\bf sin(\theta_0+\frac{\pi}{k}) }\\
\vdots\\
-{\bf sin(\theta_0+\frac{\pi}{k})}\\
{\bf sin}_k\\
{\bf sin}_k\\
{\bf sin(\theta_0)}\\
\vdots\\
{\bf sin(\theta_{k-1})}\\
{\bf cos(\theta_0+\frac{\pi}{k}) }\\
\vdots\\
{\bf cos(\theta_0+\frac{\pi}{k})}
\end{array}
\right)+s_2\left(\begin{array}{c}
{\bf sin}_k\\
{\bf sin}_k\\
{\bf sin(\theta_0)}\\
\vdots\\
{\bf sin(\theta_{k-1})}\\
{\bf cos(\theta_0+\frac{\pi}{k}) }\\
\vdots\\
{\bf cos(\theta_0+\frac{\pi}{k})}\\
-{\bf cos}_k\\
-{\bf cos}_k\\
-{\bf cos(\theta_0)}\\
\vdots\\
-{\bf cos(\theta_{k-1})}\\
{\bf sin(\theta_0+\frac{\pi}{k}) }\\
\vdots\\
{\bf sin(\theta_0+\frac{\pi}{k})}
\end{array}
\right)+s_3\left(\begin{array}{c}
{\bf 0}_k\\
{\bf 0}_k\\
{\bf R_k^0y_j\cdot R_k^{0,\perp}}\\
\vdots\\
{\bf R_k^{k-1}y_j\cdot R_k^{k-1,\perp}}\\
{\bf R_k^0z_h\cdot n_0}\\
\vdots\\
{\bf R_k^{k-1}z_h\cdot n_{k-1}}\\
-{\bf |y_1|}_k\\
-{\bf |y_{m+1}|}_k\\
-{\bf R_k^0y_j\cdot R_k^0}\\
\vdots\\
-{\bf R_k^{k-1}y_j\cdot R_k^{k-1}}\\
-{\bf R_k^0z_h\cdot t_0}\\
\vdots\\
-{\bf R_k^{k-1}z_h\cdot t_{k-1}}
\end{array}
\right)\\
&:=\left(\begin{array}{c}
{\bf v}_1\\
{\bf v}_2
\end{array}
\right)+s_1{\bf w}_1+s_2{\bf w}_1+s_3{\bf w}_3
\end{eqnarray*}
and
\begin{eqnarray*}
{\bf c}_\alpha&&={\bf v}_\alpha+s_{\alpha1}\left(\begin{array}{c}
{\bf 1}_k\\
{\bf 1}_k\\
{\bf 1}_{m-1}\\
\vdots\\
{\bf 1}_{m-1}\\
{\bf 1}_{2n-1}\\
\vdots\\
{\bf 1}_{2n-1}
\end{array}
\right)+s_{\alpha2}\left(\begin{array}{c}
|y_1|{\bf cos}_k\\
|y_{m+1}|{\bf cos}_k\\
{\bf R_k^0y_j\cdot e_1}\\
\vdots\\
{\bf R_k^{k-1}y_j\cdot e_1}\\
{\bf R_k^0z_h\cdot e_1}\\
\vdots\\
{\bf R_k^{k-1}z_h\cdot e_1}
\end{array}
\right)+s_{\alpha3}\left(\begin{array}{c}
|y_1|{\bf sin}_k\\
|y_{m+1|}{\bf sin}_k\\
{\bf R_k^0y_j\cdot e_2}\\
\vdots\\
{\bf R_k^{k-1}y_j\cdot e_2}\\
{\bf R_k^0z_h\cdot e_2}\\
\vdots\\
{\bf R_k^{k-1}z_h\cdot e_2}
\end{array}
\right)\\
&&:={\bf v}_\alpha+s_{\alpha1}{\bf w}_4+s_{\alpha 2}{\bf w}_5+s_{\alpha 3}{\bf w}_6
\end{eqnarray*}
for any $s_1,s_2,s_3, s_{\alpha1},s_{\alpha2},s_{\alpha3}\in \R$, where the vectors ${\bf v}_\alpha$ are fixed vectors with
\begin{equation}
\|{\bf v}_\alpha\|\leq C\ell^\tau e^{\frac{1-\xi}{2}\ell}\|\varphi^\perp\|_*
\end{equation}
for some $\tau,\xi>0$.
\end{proposition}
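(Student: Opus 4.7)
\smallskip

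\noindent\textbf{Proof plan.} The overall strategy is to exploit the block decomposition (\ref{eq310})--(\ref{eq312}) to reduce to the two independent subsystems $M_1\binom{\mathbf{c}_1}{\mathbf{c}_2}=-\binom{\mathbf{r}_1}{\mathbf{r}_2}$ and $H_\alpha\mathbf{c}_\alpha=-\mathbf{r}_\alpha$ for $\alpha=3,\ldots,N$. For each block I will (a) identify an explicit approximate kernel coming from the exact invariances of (\ref{eq1}), (b) establish invertibility on the orthogonal complement with a sharp quantitative bound, and (c) combine these with the estimate on $\mathbf{r}$ coming from (\ref{eq308}).

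\emph{Step 1: the approximate kernel.} For each $\beta\in\{1,\ldots,3N-3\}$ the function $z_\beta$ satisfies $L(z_\beta)=0$ \emph{exactly}, by translation or rotation invariance. The expansions of $z_1,z_2,z_{N+1}$ and of $z_\alpha,z_{N+\alpha-1},z_{2N+\alpha-3}$ displayed in Section \ref{sec3} can be read off as
\begin{equation*}
z_\beta \;=\; \sum_{i,j,\alpha} (\mathbf{w}_\beta)^i_{j,\alpha}\,Z_{j,\alpha}^i \;+\; R_\beta ,
\end{equation*}
where the coefficient vector is exactly one of $\mathbf{w}_1,\mathbf{w}_2,\mathbf{w}_3$ (for the $(\alpha=1,2)$ block) or $\mathbf{w}_4,\mathbf{w}_5,\mathbf{w}_6$ (one triple per $\alpha\geq 3$), and the remainder $R_\beta$ is a combination of $\pi_\alpha$-terms plus the discrepancies introduced by the modification in the definition of $Z_{1,\alpha}^i$. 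By Proposition \ref{pro1}, $\|R_\beta\|_* \leq C\ell^\tau e^{-(1+\xi)\ell/2}$. Applying $L$ to this identity and testing against each $Z_{i',\beta'}^t$ yields
\begin{equation*}
M\,\mathbf{w}_q \;=\; O\!\bigl(e^{-(1+\xi)\ell/2}\bigr), \qquad H_\alpha\,\mathbf{w}_q \;=\; O\!\bigl(e^{-(1+\xi)\ell/2}\bigr),
\end{equation*}
so the listed vectors span a $3{+}3(N{-}2)=3N{-}3$ dimensional \emph{approximate} kernel of $M$.

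\emph{Step 2: invertibility on the orthogonal complement.} This is where the bulk of the work lies. The leading-order entries of $M_1$ and of each $H_\alpha$ are the interaction integrals $\int L(Z_{i,\alpha}^s)\,Z_{j,\beta}^t$, whose dominant contributions come from pairs of nearest-neighbor centers in the point set $\Pi$ and are governed by the interaction function $\Psi$ introduced in (\ref{psi}); by \cite{MPW}, $\Psi(s)\sim c\,s^{-(N-1)/2}e^{-s}$. Using the $R_k$-equivariance of the configuration I diagonalize the cyclic variable $s=0,\ldots,k-1$ by discrete Fourier modes, which reduces each block to a family of tridiagonal-type operators on the indices $j$. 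A direct computation shows that the approximate kernel vectors $\mathbf{w}_q$ sit precisely in the Fourier sectors and null directions of these tridiagonal operators on which they degenerate, while every other eigenvalue is bounded below in absolute value by $c\,\Psi(\ell)\geq c\,e^{-\ell}\ell^{-(N-1)/2}$, yielding
\begin{equation*}
\bigl\|M_1^{-1}\bigr\|_{\perp}, \ \bigl\|H_\alpha^{-1}\bigr\|_{\perp} \;\leq\; C\,e^{\ell}\,\ell^{\tau}
\end{equation*}
on the orthogonal complement of the approximate kernel. This matches the spectral structure built into the reduced equations of \cite{MPW}.

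\emph{Step 3: solvability and size of the particular solution.} By self-adjointness of $L$ together with (\ref{eq308}),
\begin{equation*}
\bigl|\int L(\varphi^\perp)\,Z_{j,\alpha}^i\bigr| \;=\; \bigl|\int \varphi^\perp\,L(Z_{j,\alpha}^i)\bigr| \;\leq\; C\,\ell^\tau e^{-(1+\xi)\ell/2}\,\|\varphi^\perp\|_*,
\end{equation*}
so $\|\mathbf{r}_\alpha\|\leq C\ell^\tau e^{-(1+\xi)\ell/2}\|\varphi^\perp\|_*$. To solve $M\mathbf{c}=-\mathbf{r}$, I split $-\mathbf{r}$ into its component in $\mathrm{span}(\mathbf{w}_q)$ and its complement; the latter is inverted using Step 2, giving a particular solution $\mathbf{v}$ with $\|\mathbf{v}\|\leq C\ell^\tau e^{(1-\xi)\ell/2}\|\varphi^\perp\|_*$, while the former is absorbed into the free parameters $s_q,s_{\alpha q}$. (One must check, using Step 1, that the component of $\mathbf{r}$ along the true obstruction subspace vanishes up to a term absorbed by a further shift of $\mathbf{v}$; this follows from the self-adjointness identity $\mathbf{r}\cdot\mathbf{w}_q=\int L(\varphi^\perp)(z_\beta-R_\beta)$ and the fact that $L(\varphi^\perp)$ is a known combination of $L(\mathbf{Z}_\alpha)$ through (\ref{eq309}).) The asserted form of $\mathbf{c}_\alpha$ and the estimate on $\mathbf{v}_\alpha$ follow.

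\emph{Main obstacle.} The hard step is unambiguously Step 2: computing the leading interaction matrix, diagonalizing it by the cyclic Fourier transform, and proving that the spectral gap $\sim e^{-\ell}$ separates the six (resp.\ three) trivial modes from all the rest. This requires a careful bookkeeping of the three distinct interaction regimes (along the inner polygon, along the outer polygon, and across the connecting segments), and the tight balance between $\ell$ and $\bar\ell$ encoded in the relation $\Psi(\ell)=2\sin(\pi/k)\Psi(\bar\ell)$ from \cite{MPW} is exactly what makes this spectral gap clean.
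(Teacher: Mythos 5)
Your plan follows the paper's high-level strategy quite closely --- use the block decomposition to separate $M_1$ from the $H_\alpha$, establish the spectral gap for the nonzero eigenvalues (which is really Proposition \ref{pro401} and the circulant-matrix computations of Section \ref{sec4}), estimate $\|{\bf r}_\alpha\|$ by self-adjointness and (\ref{eq308}), and invoke the Fredholm alternative. However, the crux of the paper's proof of Proposition \ref{pro303} --- the verification that ${\bf r}$ satisfies the \emph{exact} solvability conditions of Proposition \ref{pro401} --- is where your argument becomes vague and, as stated, has a genuine gap.

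Your Step 3 says the component of $-{\bf r}$ in $\operatorname{span}\{{\bf w}_q\}$ is ``absorbed into the free parameters $s_q$.'' This misreads the Fredholm alternative: the free parameters parameterize the addition of kernel elements to a particular solution, so they can never help you hit a right-hand side that has a nonzero component along $\ker(M)$ (here $M$ is symmetric, so $\ker M$ is exactly the obstruction space). The parenthetical remark then tries to repair this by invoking $\mathbf{r}\cdot\mathbf{w}_q=\int L(\varphi^\perp)(z_\beta-R_\beta)$ and claiming the residual $-\int L(\varphi^\perp)R_\beta$ can be ``absorbed by a further shift of $\mathbf{v}$,'' but this is circular: $L(\varphi^\perp)=-\sum_\alpha {\bf c}_\alpha\cdot L({\bf Z}_\alpha)$ involves the very unknowns you are solving for, and a shift of $\mathbf{v}$ cannot change the projection of the data onto $\ker M$.

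What the paper actually does is sharper, and your Step 1 misses a key design feature. For the translation directions ($\beta=1,\dots,N$) the decomposition of $z_\beta$ into the $Z_{j,\alpha}^i$ has \emph{no} remainder at all: the $Z_{1,\alpha}^i$ are defined with the added $\pi_\alpha/k$ precisely so that $\sum_i\bigl(\cos\theta_i\,Z_{1,1}^i+\sin\theta_i\,Z_{1,2}^i\bigr)$ reproduces $\pi_1$ exactly (and similarly for the other components), whereas you write that the modification ``introduces'' a discrepancy --- it is built to eliminate it. Consequently $\mathbf{r}\cdot\mathbf{w}_q=\int L(\varphi^\perp)z_\beta=\int\varphi^\perp L(z_\beta)=0$ holds identically, with no error term to absorb. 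For the rotation directions ($\beta=N+1,\dots,3N-3$), where a nontrivial remainder $x_j\pi_\alpha-x_\alpha\pi_j$ genuinely appears, the paper does \emph{not} use the self-adjointness identity at all: it instead observes that $L(\varphi^\perp)=-\sum_\alpha{\bf c}_\alpha\cdot L({\bf Z}_\alpha)$ is invariant under the $R_k$-rotation in the $(x_1,x_2)$-plane (and under the evenness in $x_\alpha$), and the orthogonality with $\mathbf{w}_3,\mathbf{w}_5,\mathbf{w}_6$ then follows from this symmetry. Your proposal omits this second mechanism entirely, and without it the argument for the three rotation orthogonalities does not go through. Once the exact orthogonality is established by these two routes, the Fredholm alternative applied within Proposition \ref{pro401} gives the claimed form, the free-parameter ambiguity is the true kernel, and the norm bound on $\mathbf{v}_\alpha$ is the product of the $e^{\ell}\ell^\tau$ inverse bound and the $e^{-(1+\xi)\ell/2}$ bound on $\mathbf{r}_\alpha$.
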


Conditions (\ref{eq301})-(\ref{eq305}) guarantee that the solutions ${\bf c}_\alpha$ to (\ref{eq313}) is indeed unique. Furthermore, we will show that there exists a constant $C$ such that
\begin{equation*}
\sum_{\alpha=1}^N\|{\bf c}_\alpha\|\leq C\ell^\tau e^{\frac{1-\xi}{2}\ell}\|\varphi^\perp\|,
\end{equation*}
and
\begin{equation}
\|\varphi^\perp\|_*\leq Ce^{-\frac{1+\xi}{2}\ell}\sum_{\alpha=1}^N\|{\bf c}_\alpha\|.
\end{equation}
Combining the above two estimates,we have that
\begin{equation*}
{\bf c}_\alpha=0 \mbox{ for }\alpha=1,\cdots,N \mbox{ and }\varphi^\perp=0.
\end{equation*}

\setcounter{equation}{0}
\section{Analysis of the matrix $M$}\label{sec4}
This section is devoted to the analysis of kernels and eigenvalues of the matrices $A,B,C,H_\alpha$. The main result in this section is the following solvability condition of the matrix $M$:

\begin{proposition}\label{pro401}
Part a.

There exists $\ell_0>0$ such that for $\ell>\ell_0$, System
\begin{equation*}
M_1\left(\begin{array}{c}
{\bf c}_1\\
{\bf c}_2\
\end{array}
\right)=
\left(\begin{array}{c}
{\bf r}_1\\
{\bf r}_2\
\end{array}
\right)
\end{equation*}
is solvable if
\begin{equation*}
\left(\begin{array}{c}
{\bf r}_1\\
{\bf r}_2\
\end{array}
\right)\cdot {\bf w}_1=\left(\begin{array}{c}{\bf r}_1\\
{\bf r}_2\
\end{array}
\right)\cdot {\bf w}_2=\left(\begin{array}{c}
{\bf r}_1\\
{\bf r}_2\
\end{array}
\right)\cdot {\bf w}_3=0.
\end{equation*}
Furthurmore the solution has the form
\begin{equation}
\left(\begin{array}{c}
{\bf c}_1\\
{\bf c}_2\
\end{array}
\right)=\left(\begin{array}{c}
{\bf v}_1\\
{\bf v}_2\
\end{array}
\right)+t_1{\bf w}_1+t_2{\bf w}_2+t_3{\bf w}_3
\end{equation}
for all $t_i\in\R$ and with $\left(\begin{array}{c}
{\bf v}_1\\
{\bf v}_2\
\end{array}
\right)$ a fixed vector such that
\begin{equation}
\|\left(\begin{array}{c}
{\bf v}_1\\
{\bf v}_2\
\end{array}
\right)\|\leq C\ell^\tau e^{\ell}\|\left(\begin{array}{c}
{\bf r}_1\\
{\bf r}_2\
\end{array}
\right)\|.
\end{equation}

Part b.

Let $\alpha=3,\cdots,N$. There exists $\ell_0>0$,for any $\ell>\ell_0$,
\begin{equation}
H_\alpha({\bf c}_\alpha)={\bf r}_\alpha
\end{equation}
is solvable if

\begin{equation*}
{\bf r}_\alpha\cdot {\bf w}_4={\bf r}_\alpha\cdot {\bf w}_5={\bf r}_\alpha\cdot {\bf w}_6=0.
\end{equation*}

Furthermore, the solution has the form
\begin{equation}
{\bf c}_\alpha={\bf v}_\alpha+t_{\alpha 1}w_4+t_{\alpha 2}w_5+t_{\alpha 3}w_6,
\end{equation}
for all $t_{\alpha i}\in\R$ and ${\bf v}_\alpha$ a fixed vector such that
\begin{equation}
\|{\bf v}_\alpha\|\leq C\ell^\tau e^{\ell}\|{\bf r}_\alpha\|.
\end{equation}

\end{proposition}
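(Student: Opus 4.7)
\textbf{Proof plan for Proposition \ref{pro401}.}

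My plan is to prove Part a and Part b simultaneously, since the only structural difference is the dimension of the kernel and the identity of the generating invariances; the mechanism is the same. The strategy is the familiar ``Fredholm alternative for a matrix with approximate kernel coming from broken symmetries,'' made quantitative via the interaction scale $e^{-\ell}$.

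First I would identify the kernels. The three vectors $\mathbf{w}_1,\mathbf{w}_2,\mathbf{w}_3$ (resp.\ $\mathbf{w}_4,\mathbf{w}_5,\mathbf{w}_6$) are nothing other than the coefficients that appear when one expands the exact elements $z_1,z_2,z_{N+1}$ (resp.\ $z_\alpha,z_{N+\alpha-1},z_{2N+\alpha-3}$) of $\mathcal Z_u$ on the family $\{Z_{j,\beta}^i\}$: this is exactly the expansion already written out in Section~\ref{sec3} for $z_1,z_2,z_{N+1},\ldots$, modulo the remainders $\pi_\alpha$ and $x_k\pi_\alpha$. Since $L(z_\beta)=0$ and the $L^2$-products $\int L(Z_{i,\beta}^s)Z_{j,\gamma}^t$ are the entries of $M_1$ (resp.\ $H_\alpha$), applying Proposition~\ref{pro1} and the decay of $\pi_\alpha$ gives
\[
M_1\,\mathbf{w}_a=O(e^{-\frac{1+\xi}{2}\ell})\quad(a=1,2,3),\qquad H_\alpha\,\mathbf{w}_b=O(e^{-\frac{1+\xi}{2}\ell})\quad(b=4,5,6),
\]
so $\mathbf{w}_1,\mathbf{w}_2,\mathbf{w}_3$ and $\mathbf{w}_4,\mathbf{w}_5,\mathbf{w}_6$ are \emph{approximate} kernels. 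By the symmetry of $M_1$ and $H_\alpha$, the compatibility conditions $\mathbf{r}\cdot\mathbf{w}_a=0$ in the statement are therefore necessary up to errors of the same order; I will absorb these errors into the final fixed-point/perturbation step.

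Next I would compute the leading-order form of the matrices and isolate the spectral gap. The entries $\int L(Z_{i,\beta}^s)Z_{j,\gamma}^t$ can be evaluated by the standard interaction calculus around a sum of translated copies of $w$: the diagonal terms are $o(1)$ because the $L(Z_{j,\beta}^i)$'s are of order $|u|^{p-1}-w^{p-1}(\cdot-R_k^iy_j)$ near $R_k^iy_j$, while the off-diagonal terms decay like $\Psi(|R_k^iy_j-R_{k}^{i'}y_{j'}|)$, i.e.\ like $e^{-\mathrm{dist}}$. To leading order only nearest-neighbour pairs along the vertex polygons and along the two segments contribute. The matrices $A$, $B$, $C$ and each $H_\alpha$ therefore have a \emph{block-circulant} structure with respect to the $\mathbb Z_k$-action generated by $R_k$, which I would diagonalize by the discrete Fourier transform on $\mathbb Z_k$. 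The $k$ Fourier blocks reduce to band (essentially tridiagonal) matrices of size $2(m+2n)$ (for $M_1$) or $m+2n$ (for $H_\alpha$), with non-zero entries of order $e^{-\ell}$. In each Fourier mode I can then read off the eigenvalues by a direct Jacobi-matrix argument.

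The heart of the proof, and the step I expect to cost the most work, is the \emph{quantitative} spectral analysis that shows (i)~exactly three Fourier modes of $M_1$ (corresponding to the angular frequencies $0$ and $\pm 1$ that generate the $\mathbb R^2$-translations and the $(x_1,x_2)$-rotation) carry a one-dimensional kernel each, matching $\mathbf{w}_1,\mathbf{w}_2,\mathbf{w}_3$, with all other eigenvalues bounded below by $c\,\ell^{-\tau}e^{-\ell}$; and (ii)~exactly three Fourier modes of $H_\alpha$ (again $0$ and $\pm 1$) carry one-dimensional kernels matching $\mathbf{w}_4,\mathbf{w}_5,\mathbf{w}_6$, other eigenvalues $\geq c\,\ell^{-\tau}e^{-\ell}$. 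The lower bound is delicate because it has to rule out accidental cancellations between the inner-polygon, outer-polygon and connecting-segment contributions; the ingredients are the balancing relations $2\sin(\pi/k)m\ell=(2n-1)\bar\ell$ and $\Psi(\ell)=2\sin(\pi/k)\Psi(\bar\ell)$ that were used in \cite{MPW} to construct $u_\ell$, which force the Jacobi matrices to be strictly diagonally dominant on each non-kernel Fourier mode after peeling off the explicit kernel.

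Once this spectral gap is established, the rest is routine. Under the orthogonality conditions $\mathbf{r}\cdot\mathbf{w}_a=0$ I decompose $\mathbf{r}$ into its projections on the $k$ Fourier modes; the three modes carrying the approximate kernel are handled by projecting away the kernel (after correcting by the $O(e^{-\frac{1+\xi}{2}\ell})$ remainder from Step~1, which is absorbed by a contraction-mapping argument since the inverse norm is at worst $e^\ell\ell^\tau$ and the remainder is $e^{-(1+\xi)\ell/2}$), while the other modes are inverted directly using the $c\,\ell^{-\tau}e^{-\ell}$ lower bound. In all cases one obtains a particular solution $\mathbf{v}$ with $\|\mathbf{v}\|\leq C\ell^\tau e^{\ell}\|\mathbf{r}\|$, and the general solution is obtained by adding an arbitrary linear combination of $\mathbf{w}_1,\mathbf{w}_2,\mathbf{w}_3$ (resp.\ $\mathbf{w}_4,\mathbf{w}_5,\mathbf{w}_6$), as stated.
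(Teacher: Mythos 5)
Your overall strategy (block-circulant structure, discrete Fourier transform on $\mathbb Z_k$, identify three kernel modes, Fredholm alternative) is the same as the paper's, and your intuition about which modes carry the kernel is right. The paper's reduction is ordered slightly differently from yours: it first uses the explicit inverse of the tridiagonal matrices $T_{m-1}$, $T_{2n-1}$ to eliminate the interior chain coordinates $a_2^i,\dots,a_m^i$ and $a_{m+2}^i,\dots,a_{m+2n}^i$ in favour of the endpoint unknowns $a_1^i$, $a_{m+1}^i$, leaving a $4k\times 4k$ (resp.\ $2k\times 2k$) system of genuine circulant $k\times k$ blocks, and only then applies the DFT and computes the determinants of the resulting $4\times 4$ (resp.\ $2\times 2$) Fourier blocks. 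Your plan (DFT first, then a Jacobi-matrix analysis of the size-$2(m+2n)$ blocks) is a different but viable ordering of the same two reductions.

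The one genuine gap is your claim that $M_1\,{\bf w}_a=O(e^{-\frac{1+\xi}{2}\ell})$ and that ${\bf w}_1,{\bf w}_2,{\bf w}_3$ are only \emph{approximate} kernels, to be corrected by a contraction/fixed-point step. In fact they are \emph{exact} kernel vectors of $M_1$, and this exactness is needed to justify the clean Fredholm alternative stated (``solvable if and only if orthogonal to ${\bf w}_a$,'' with the free parameters $t_a$ in the solution). The reason is buried in the definition of $Z_{1,\alpha}^i$: the terms $\pi_\alpha/k$ were added precisely so that the linear combination of the $Z$'s with coefficients ${\bf w}_1$ reproduces $z_1=\partial_{x_1}u$ \emph{exactly} (not just up to a $\pi_\alpha$-size remainder), and similarly ${\bf w}_2\leftrightarrow z_2$, ${\bf w}_3\leftrightarrow z_{N+1}$. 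Since $L$ is formally self-adjoint and $L(z_\beta)=0$ exactly, each entry of $M_1{\bf w}_a$ is $\int L(Z_{j,\alpha}^i)\,z_\beta=\int Z_{j,\alpha}^i\,L(z_\beta)=0$, exactly. If you instead treat ${\bf w}_a$ as approximate kernels, the three nearly-zero eigenvalues of $M_1$ might be small but nonzero (of order $e^{-(1+\xi)\ell}$, below the $\ell^{-\tau}e^{-\ell}$ spectral gap in the other modes), in which case the system would be invertible with an inverse far larger than $\ell^\tau e^\ell$, the orthogonality conditions would not be necessary, and the final bootstrap in Section~\ref{sec6} would fail. So rather than absorbing an error by contraction, the correct step is to verify the exact identity $z_\beta={\bf w}_a\cdot{\bf Z}$ and conclude $M_1{\bf w}_a=0$, then establish the spectral gap for the remaining $(2n+m)k-3$ eigenvalues by the Fourier computation. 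The same remark applies verbatim to $H_\alpha$ and ${\bf w}_4,{\bf w}_5,{\bf w}_6$.
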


Before we prove the above Proposition, we first need to introduce some notations.

For all $\bar{n}\geq 2$, we define the $\bar{n}\times \bar{n}$ matrix
\begin{equation}
T_{\bar{n}}=\left(\begin{array}{ccccc}
2&-1&0&\cdots&0\\
-1&2&\ddots&\ddots&\vdots\\
0&\ddots&\ddots&\ddots&0\\
\vdots&\ddots&\ddots&2&-1\\
0&\cdots&0&-1&2
\end{array}
\right)
\end{equation}
In application, the integer $\bar{n}$ will be equal to $m-1$ or $2n-1$.

It is easy to check that the inverse of $T_{\bar{n}}$ is the matrix whose entries are given by
\begin{equation}
(T_{\bar{n}}^{-1})_{ij}=\min(i,j)-\frac{ij}{\bar{n}+1}.
\end{equation}
We define the vectors $S^\downarrow$ and $S^\uparrow$ by
\begin{equation}
 T_{\bar{n}} \, S^{\downarrow} : =
\begin{pmatrix}
0  \\
 \vdots \\
0 \\
1 \\
\end{pmatrix}
\in\R^{\bar{n}}
\qquad
T_{\bar{n}} \, S^{\uparrow}: =
\begin{pmatrix}
1 \\
0 \\
\vdots \\
0 \\
\end{pmatrix}
\in \R^{\bar{n}} \, .
\end{equation}
It is immediate to check that
\begin{equation}
\label{defRexplicit}
 S^{\uparrow} : =
\begin{pmatrix} {\bar{n}\over \bar{n}+1}  \\
{\bar{n}-1 \over \bar{n}+1}\\
 \vdots \\
{2\over \bar{n}+1} \\
{1 \over \bar{n}+1} \\
\end{pmatrix}
\in\R^{\bar{n}}
\qquad
S^{\downarrow}: =
\begin{pmatrix}
{1 \over \bar{n}+1} \\
{2\over \bar{n}+1} \\
\vdots \\
{\bar{n}-1 \over \bar{n}+1} \\
{\bar{n}\over \bar{n}+1}
\end{pmatrix}
\in \R^{\bar{n}} \, .
\end{equation}
\medskip

We also introduce the following vectors:
\begin{equation*}
{\bf d}_{L,i}=(c,0\cdots,0)\in \R^i
\end{equation*}
\begin{equation}
{\bf d}_{R,i}=(0,\cdots,0,c)\in \R^{i}
\end{equation}
and
\begin{equation}
{\bf d}_{i}=(d,d,\cdots,d)\in \R^{i}.
\end{equation}
In application $i=m-1$ or $2n-1$.

As we will see below that the circulant matrix will play important role in our proof. We recall the definition of circulant matrix.

A circulant matrix $X$ of dimension $k\times k$ has the form
\begin{equation}
X=\left(\begin{array}{ccccc}
x_0 & x_1&\cdots & x_{k-2} & x_{k-1}\\
x_{k-1} & x_0 & x_1&\cdots & x_{k-2}\\
\cdots & x_{k-1} & x_{0} & x_1&\cdots\\
\cdots&\cdots&\cdots&\cdots&\cdots\\
\cdots&\cdots&\cdots&\cdots & x_1\\
x_1&\cdots&\cdots& x_{k-1} & x_0
\end{array}
\right)
\end{equation}
or equivalently, if $x_{ij}, i,j=1,\cdots,k$ are the entrances of the matrix $X$, then
\begin{equation*}
x_{ij}=x_{1,|i-j|+1}.
\end{equation*}
In particular, in order to know a circulant matrix, it is enough to know the entrance of the first row. We denote by
\begin{equation}
X=\operatorname{Cir}\{(x_0,x_1,\cdots,x_{k-1})\}
\end{equation}
the above mentioned circulant matrix.

The eigenvalue of a circulant matrix $X$ are given by the explicit formula
\begin{equation}\label{eigenvalue}
\eta_s=\sum_{l=1}^{k-1}x_l e^{\frac{2\pi s}{k}il}, \ s=0,\cdots,k-1
\end{equation}
and with corresponding normalized eigenvectors defined by
\begin{equation}
E_s=k^{-\frac{1}{2}}\left(\begin{array}{c}
1\\ e^{\frac{2\pi s}{k}i}\\ e^{\frac{2\pi s}{k}i2}\\ \vdots\\ e^{\frac{2\pi s}{k}i(k-1)}
\end{array}\right).
\end{equation}
Observe that any circulant matrix $X$ can be diagonalized
\begin{equation*}
X=P D_{X}P^t
\end{equation*}
where $D_X$ is the diagonal matrix
\begin{equation*}
D_X=\operatorname{diag}(\eta_0,\eta_1,\cdots,\eta_{k-1})
\end{equation*}
and $P$ is the $k\times k$ matrix defined by
\begin{equation}\label{P}
P=(E_0|E_1|\cdots|E_{k-1}).
\end{equation}

\noindent

Next we will analyze the matrix $H_\alpha$ and $M_1$.

\medskip
\noindent
{\bf The computation of $H_\alpha$.}
We first analyze the kernels of the matrix $H_\alpha$.

First we denote by
\begin{equation*}
\frac{\Psi_2(\bar{\ell})}{\Psi_2(\ell)}=\frac{\delta_2}{2\sin\frac{\pi}{k}}.
\end{equation*}

Dividing both sides of the equation $H_\alpha({\bf c}_\alpha)=0$ by $\Psi_2(\ell)$, we get that
\begin{equation*}
\bar{H}_\alpha({\bf c}_\alpha)=0
\end{equation*}
where $\bar{H}_\alpha=\frac{H_\alpha}{\Psi_2(\ell)}$.

By the computations in Section \ref{sec7}, we know that $\bar{H}_\alpha$ has the form
\begin{equation}\label{eq401}
\bar{H}_\alpha=\left(\begin{array}{cccc}
H_{\alpha,1}&0&H_{\alpha,2}&0\\
0&H_{\alpha,3}&H_{\alpha,4}&H_{\alpha,5}\\
H_{\alpha,2}^t&H_{\alpha,4}^t&H_{\alpha,6}&0\\
0&H_{\alpha,5}^t&0&H_{\alpha,7}
\end{array}
\right)+O(e^{-\xi\ell})
\end{equation}
where
\begin{equation*}
H_{\alpha,1}=\left(\begin{array}{cccccc}
-1-\frac{\delta_2}{\sin\frac{\pi}{k}}&\frac{\delta_2}{2\sin\frac{\pi}{k}}&0&\cdots&0&\frac{\delta_2}{2\sin\frac{\pi}{k}}\\
\frac{\delta_2}{2\sin\frac{\pi}{k}}&-1-\frac{\delta_2}{\sin\frac{\pi}{k}}&\frac{\delta_2}{2\sin\frac{\pi}{k}}&0&\cdots&0\\
0&\ddots&\ddots&\ddots&\ddots&0\\
\vdots&\ddots&\ddots&\ddots&\ddots&\vdots\\
0&\cdots&0&\frac{\delta_2}{2\sin\frac{\pi}{k}}&-1-\frac{\delta_2}{\sin\frac{\pi}{k}}&\frac{\delta_2}{2\sin\frac{\pi}{k}}\\
\frac{\delta_2}{2\sin\frac{\pi}{k}}&0&\cdots&0&\frac{\delta_2}{2\sin\frac{\pi}{k}}&-1-\frac{\delta_2}{\sin\frac{\pi}{k}}
\end{array}
\right)_{k\times k}
\end{equation*}
and
\begin{equation*}
H_{\alpha,2}=\left(\begin{array}{cccc}
{\bf 1}_{L,m-1}&{\bf 0}_{m-1}&\cdots&{\bf 0}_{m-1}\\
{\bf 0}_{m-1}&{\bf 1}_{L,m-1}&\cdots&{\bf 0}_{m-1}\\
\cdots&\cdots&\cdots&\cdots\\
{\bf 0}_{m-1}&\cdots&\cdots&{\bf 1}_{L,m-1}
\end{array}
\right)_{[(m-1)\times k]\times k},
\end{equation*}

\begin{equation*}
H_{\alpha,3}=\left(\begin{array}{cccc}
\frac{\delta_2}{\sin\frac{\pi}{k}}-1&0&\cdots&0\\
0&\frac{\delta_2}{\sin\frac{\pi}{k}}-1&0&0\\
\vdots&\ddots&\ddots&0\\
0&\cdots&\cdots&\frac{\delta_2}{\sin\frac{\pi}{k}}-1
\end{array}
\right)_{k\times k},
\end{equation*}

\begin{equation*}
H_{\alpha,4}=\left(\begin{array}{cccc}
{\bf 1}_{R,m-1}&{\bf 0}_{m-1}&\cdots&{\bf 0}_{m-1}\\
{\bf 0}_{m-1}&{\bf 1}_{R,m-1}&\cdots&{\bf 0}_{m-1}\\
\cdots&\cdots&\cdots&\cdots\\
{\bf 0}_{m-1}&\cdots&\cdots&{\bf 1}_{R,m-1}\end{array}
\right)_{[(m-1)\times k]\times k},
\end{equation*}

\begin{equation*}
H_{\alpha,5}=\left(\begin{array}{cccc}
-{\bf \frac{\delta_2}{2\sin\frac{\pi}{k}}}_{L,2n-1}&{\bf 0}_{2n-1}&\cdots&-{\bf \frac{\delta_2}{2\sin\frac{\pi}{k}}}_{R,2n-1}\\
-{\bf \frac{\delta_2}{2\sin\frac{\pi}{k}}}_{R,2n-1}&-{\bf \frac{\delta_2}{2\sin\frac{\pi}{k}}}_{L,2n-1}&\cdots&{\bf 0}_{2n-1}\\
\cdots&\cdots&\cdots&\cdots\\
{\bf 0}_{2n-1}&\cdots&-{\bf \frac{\delta_2}{2\sin\frac{\pi}{k}}}_{R,2n-1}&-{\bf \frac{\delta_2}{2\sin\frac{\pi}{k}}}_{L,2n-1}
\end{array}
\right)_{[(2n-1)\times k]\times k},
\end{equation*}
\begin{equation*}
H_{\alpha,6}=\left(\begin{array}{cccc}
-T_{m-1}&0&\cdots&0\\
0&-T_{m-1}&\cdots&0\\
\vdots&\vdots&\vdots&-T_{m-1}
\end{array}
\right),
\end{equation*}
and
\begin{equation*}
H_{\alpha,7}=\left(\begin{array}{cccc}
\frac{\delta_2}{2\sin\frac{\pi}{k}}T_{2n-1}&0&\cdots&0\\
0&\frac{\delta_2}{2\sin\frac{\pi}{k}}T_{2n-1}&\cdots&0\\
\vdots&\vdots&\vdots&\frac{\delta_2}{2\sin\frac{\pi}{k}}T_{2n-1}
\end{array}
\right).
\end{equation*}
\medskip

We want to analyze the eigenvalues of the matrix $\bar{H}_\alpha$. If $\bar{H}_\alpha{\bf a}=0$. First by considering the third row of the matrix $\bar{H}_\alpha$ written in the form (\ref{eq401}), one can get that
\begin{equation*}
(-T_{m-1}+O(e^{-\xi}\ell))({\bf a}_{Y_1,i})+\left(\begin{array}{c}
a_{1}^i\\
0\\
\vdots\\
0\\
a_{m+1}^i
\end{array}
\right)=O(e^{-\xi}\ell){\bf a}_{v},
\end{equation*}
and
\begin{equation*}
(T_{2n-1}+O(e^{-\xi}\ell))({\bf a}_{Y_2}^i)-\left(\begin{array}{c}
a_{m+1}^i\\
0\\
\vdots\\
0\\
a_{m+1}^{i+1}
\end{array}
\right)=O(e^{-\xi\ell}){\bf a}_v.
\end{equation*}

From the above two equations, using (\ref{defRexplicit}),  one has that for $i=0,\cdots,k-1$
\begin{equation}\label{eq403}
\left\{\begin{array}{l}
a_{2}^i=\frac{1}{m}((m-1)a_{1}^i+a_{m+1}^i)+O(e^{-\xi\ell}){\bf a}_v\\
\\
a_{m}^i=\frac{1}{m}(a_{1}^i+(m-1)a_{m+1}^i)+O(e^{-\xi\ell}){\bf a}_v,
\end{array}
\right.
\end{equation}
and
\begin{equation}\label{eq404}
\left\{\begin{array}{l}
a_{m+2}^i=\frac{1}{2n}((2n-1)a_{m+1}^i+a_{m+1}^{i+1})+O(e^{-\xi\ell}){\bf a}_v\\
\\
a_{m+2n}^i=\frac{1}{2n}(a_{m+1}^i+(2n-1)a_{m+1}^{i+1})+O(e^{-\xi\ell}){\bf a}_v.
\end{array}
\right.
\end{equation}
Next we consider the first and second rows of the matrix $\bar{H}_\alpha$ in (\ref{eq401}), we can get that
\begin{equation*}
H_{\alpha,1}\left(\begin{array}{c}
a_{1}^0\\ a_{1}^1\\
\vdots\\ a_{1}^{k-1}
\end{array}
\right)+\left(\begin{array}{c}
a_{2}^0\\ a_{2}^1\\ \vdots\\ a_{2}^{k-1}\end{array}\right)=O(e^{-\xi\ell}){\bf a}_v
\end{equation*}
and
\begin{equation*}
H_{\alpha,3}\left(\begin{array}{c}
a_{m+1}^0\\ a_{m+1}^1\\
\vdots\\ a_{m+1}^{k-1}
\end{array}
\right)+\left(\begin{array}{c}
a_{m}^0-\frac{\delta_2}{2\sin\frac{\pi}{k}}(a_{m+2}^0+a_{m+2n}^{k-1})\\ a_{m}^1-\frac{\delta_2}{2\sin\frac{\pi}{k}}(a_{m+2}^1-a_{m+2n}^0)\\ \vdots\\ a_{m}^{k-1}-\frac{\delta_2}{2\sin\frac{\pi}{k}}(a_{m+2}^{k-1}-a_{m+2n}^{k-2})\end{array}\right)=O(e^{-\xi\ell}){\bf a}_v.
\end{equation*}
Using the above two equations (\ref{eq403}) and (\ref{eq404}) for $a_{2}^i,a_{m}^i,a_{m+2}^i,a_{m+2n}^i$, the above two equations are reduced to a $2k$ system of $2k$ unknowns $a_{1}^i, a_{m+1}^i$ for $i=0,\cdots,k-1$:
\begin{equation}
\tilde{H}_\alpha\left(\begin{array}{c}
a_{1}^0 \\ \vdots \\ a_{1^{k-1}} \\ a_{m+1}^0\\ \vdots\\ a_{m+1}^{k-1}
\end{array}
\right)=O(e^{-\xi
\ell}){\bf a}_v
\end{equation}
where
\begin{equation*}
\tilde{H}_\alpha=\left(\begin{array}{cc}
\tilde{H}_{\alpha,1}&\tilde{H}_{\alpha,2}\\
\tilde{H}_{\alpha,2}^t&\tilde{H}_{\alpha,3}
\end{array}
\right)
\end{equation*}
and $\tilde{H}_{\alpha,i}$ are all circulant matrices with
\begin{equation}
\tilde{H}_{\alpha,1}=\operatorname{Cir}\{(-\frac{1}{m}-\frac{c_2}{\sin\frac{\pi}{k}},\frac{c_2}{2\sin\frac{\pi}{k}},0,\cdots,0,\frac{c_2}{2\sin\frac{\pi}{k}})\},
\end{equation}
\begin{equation}
\tilde{H}_{\alpha,2}=\operatorname{Cir}\{(\frac{1}{m},0\cdots,0)\},
\end{equation}
\begin{equation}
\tilde{H}_{\alpha,3}=\operatorname{Cir}\{(-\frac{1}{m}+\frac{c_2}{2n\sin\frac{\pi}{k}},-\frac{c_2}{4n\sin\frac{\pi}{k}},0,\cdots,0,-\frac{c_2}{4n\sin\frac{\pi}{k}})\}.
\end{equation}

\medskip
\noindent
{\bf Eigenvalue for $\tilde{H}_{\alpha,1}$}: A direct application of (\ref{eigenvalue}) gives that the eigenvalues of the matrix $\tilde{H}_{\alpha,1}$ are given by
\begin{equation}
h_{1,i}=\frac{\delta_2}{\sin\frac{\pi}{k}}(\cos\frac{2\pi i}{k}-1)-\frac{1}{m}
\end{equation}
for $i=0,\cdots,k-1$.

\noindent
{\bf Eigenvalue for $\tilde{H}_{\alpha,3}$}: The eigenvalues of the matrix $\tilde{H}_{\alpha,3}$ are given by
\begin{equation}
h_{3,i}=\frac{\delta_2}{4n\sin\frac{\pi}{k}}(2-2\cos\frac{2\pi i}{k})-\frac{1}{m}.
\end{equation}
for $i=0,\cdots,k-1$.

Define
\begin{equation}
\mathcal{P}=\left(\begin{array}{cc}
P&0\\
0&P
\end{array}
\right)
\end{equation}
a simple algebra gives that
\begin{equation*}
\tilde{H}_\alpha=\mathcal{P}\left(\begin{array}{cc}
D_1&D_2\\
D_2&D_3
\end{array}
\right)\mathcal{P}^t
\end{equation*}
where
\begin{equation*}
D_1=\operatorname{diag}(h_{1,0},\cdots,h_{1,k-1}),
\end{equation*}
\begin{equation*}
 D_2=\operatorname{diag}(\frac{1}{m},\cdots,\frac{1}{m}),
\end{equation*}
\begin{equation*}
D_3=\operatorname{diag}(h_{3,0},\cdots,h_{3,k-1}).
\end{equation*}

We consider the matrix
\begin{equation}
\mathcal{D}_i=\left(\begin{array}{cc}
h_{1,i}&\frac{1}{m}\\
\frac{1}{m}&h_{3,i}
\end{array}\right).
\end{equation}

The determinant of $\mathcal{D}_i$ is given by
\begin{equation}
\operatorname{Det}(\mathcal{D}_i)=\frac{2\delta_2}{n}\sin^2\frac{\pi i}{k}(1-\frac{\sin^2\frac{\pi i}{k}}{\sin^2\frac{\pi}{k}})(1+O(\frac{1}{\ell}))
\end{equation}
One can check that $\operatorname{Det}(\mathcal{D}_i)=0$, for $i=0,1,k-1$ and $|\operatorname{Det}(\mathcal{D}_i)|\geq \frac{c}{n}$ for $i\geq 2$.

From the above analysis, one can see that the matrix $\bar{H}_\alpha$ has at most three kernels and except zero eigenvalue, the other eigenvalues will have a lower bound $\frac{C}{\ell}$. Moreover, one can directly check that ${\bf w}_4,{\bf w}_5,{\bf w}_6$ are in the kernels of $\bar{H}_\alpha$, so one can get part (a) of proposition \ref{pro401}.

\medskip
\noindent
{\bf The computation of the matrix $M_1$}.
First we denote by
\begin{equation*}
\bar{M}_1=\frac{1}{\Psi_1(\ell)}M_1
\end{equation*}
and introduce the following notations:
\begin{equation}
\Psi_2(\ell)=\frac{\sigma_1}{\ell}\Psi_1(\ell), \ \Psi_2(\bar{\ell})=\frac{\sigma_2}{\ell}\Psi_1(\bar{\ell})
\end{equation}
and
\begin{equation}
\frac{\Psi_1(\bar{\ell})}{\Psi_1(\ell)}=\frac{\sigma_3}{2\sin\frac{\pi}{k}}.
\end{equation}
In fact from (\ref{eq701}), one can get that $\sigma_3=(\frac{d\bar{\ell}}{d\ell})^{-1}$.
By the computation in Section \ref{sec7}, we know that $\bar{M}_1$ can be written in the following form:
\begin{equation}\label{M1}
\bar{M}_1=\left(\begin{array}{cccccccc}
A_{11}^1&0&A_{12}^1&0&B_{11}^1&0&0&0\\
0&A_{11}^3&A_{12}^2&A_{12}^3&0&0&0&B_{12}^1\\
A_{12}^{1,t}&A_{12}^{2,t}&A_{13}^1&0&0&0&0&0\\
0&A_{12}^{3,t}&0&A_{13}^2&0&B_{21}^1&0&0\\
B_{11}^{1,t}&0&0&0&C_{11}^1&0&C_{12}^1&0\\
0&0&0&B_{21}^{1,t}&0&C_{11}^3&C_{12}^2&C_{12}^3\\
0&0&0&0&C_{12}^{1,t}&C_{12}^{2,t}&C_{13}^1&0\\
0&B_{12}^{1,t}&0&0&0&C_{12}^{3,t}&0&C_{13}^2
\end{array}
\right)
\end{equation}
where $A_{11}^1, A_{11}^3$ iare $k\times k$  circulant matrices:

\begin{equation*}
A_{11}^1=\operatorname{Cir}\{(A_{11,0}^1, A_{11,1}^1,0,\cdots, 0,A_{11,k-1}^1)\},
 \end{equation*}
 \begin{equation*}
 A_{11,0}^1=-1-\frac{\sigma_3}{\sin\frac{\pi}{k}}(\sin^2\frac{\pi}{k}+\frac{\sigma_2}{\ell}\cos^2\frac{\pi}{k}),
 \end{equation*}
 \begin{equation*}
 A_{11,1}^1=A_{11,k-1}^1=\frac{\sigma_3}{2\sin\frac{\pi}{k}}(-\sin^2\frac{\pi}{k}+\frac{\sigma_2}{\ell}\cos^2\frac{\pi}{k}),
 \end{equation*}
and
\begin{equation*}
A_{11}^3=\operatorname{Cir}\{(-1+\frac{\sigma_3}{\sin\frac{\pi}{k}}(\sin^2\frac{\pi}{k}+\frac{\sigma_2}{\ell}\cos^2\frac{\pi}{k}),0,\cdots,0)\},
\end{equation*}

and
\begin{equation*}
A_{12}^1=\left(\begin{array}{cccc}
{\bf 1}_{L,m-1}&{\bf 0}_{m-1}&\cdots&{\bf 0}_{m-1}\\
{\bf 0}_{m-1}&{\bf 1}_{L,m-1}&\cdots&{\bf 0}_{m-1}\\
\vdots&\ddots&\ddots&\vdots\\
{\bf 0}_{m-1}&\cdots&\cdots&{\bf 1}_{L,m-1}
\end{array}
\right)_{[(m-1)\times k]\times k},
\end{equation*}
\begin{equation*}
A_{12}^2=\left(\begin{array}{cccc}
{\bf 1}_{R,m-1}&{\bf 0}_{m-1}&\cdots&{\bf 0}_{m-1}\\
{\bf 0}_{m-1}&{\bf 1}_{R,m-1}&\cdots&{\bf 0}_{m-1}\\
\vdots&\ddots&\ddots&\vdots\\
{\bf 0}_{m-1}&\cdots&\cdots&{\bf 1}_{R,m-1}\end{array}
\right)_{[(m-1)\times k]\times k},
\end{equation*}

\begin{equation*}
A_{12}^3=\left(\begin{array}{cccc}
{\bf \frac{\sigma_3}{2}}_{L,2n-1}&{\bf 0}_{2n-1}&\cdots&-{\bf \frac{\sigma_3}{2}}_{R,2n-1}\\
-{\bf \frac{\sigma_3}{2}}_{R,2n-1}&{\bf \frac{\sigma_3}{2}}_{L,2n-1}&\cdots&{\bf 0}_{2n-1}\\
{\bf 0}_{2n-1}&\ddots&\ddots&{\bf 0}_{2n-1}\\
{\bf 0}_{2n-1}&\cdots&-{\bf \frac{\sigma_3}{2}}_{R,2n-1}&{\bf \frac{\sigma_3}{2}}_{L,2n-1}\end{array}
\right)_{[(2n-1)\times k]\times k},
\end{equation*}

and
\begin{equation*}
A_{13}^1=\left(\begin{array}{cccc}
-T_{m-1}&0&\cdots&0\\
0&-T_{m-1}&\cdots&0\\
\vdots&\ddots&\ddots&-T_{m-1}
\end{array}
\right),
\end{equation*}
\begin{equation*}
A_{13}^2=\left(\begin{array}{cccc}
\frac{\sigma_3}{2\sin\frac{\pi}{k}}T_{2n-1}&0&\cdots&0\\
0&\frac{\sigma_3}{2\sin\frac{\pi}{k}}T_{2n-1}&\cdots&0\\
\vdots&\ddots&\ddots&\frac{\sigma_3}{2\sin\frac{\pi}{k}}T_{2n-1}
\end{array}
\right).
\end{equation*}

\medskip

For the matrix $B$, we have $B_{11}^1$ is a $k\times k$ circulant matrix:
\begin{equation*}
B_{11}^1=\operatorname{Cir}\{(0,\frac{\sigma_3\cos\frac{\pi}{k}}{2}(1+\frac{\sigma_2}{\ell}),0,\cdots,\frac{\sigma_3\cos\frac{\pi}{k}}{2}(1+\frac{\sigma_2}{\ell}))\},
\end{equation*}
and
\begin{equation*}
B_{12}^1=\frac{\sigma_2\sigma_3\cos \frac{\pi}{k}}{2\ell \sin\frac{\pi}{k}}\left(\begin{array}{cccc}
-{\bf 1}_{L,2n-1}&{\bf 0}_{2n-1}&\cdots&-{\bf 1}_{R,2n-1}\\
-{\bf 1}_{R,2n-1}&-{\bf 1}_{L,2n-1}&\cdots&{\bf 0}_{2n-1}\\
{\bf 0}_{2n-1}&\ddots&\ddots&{\bf 0}_{2n-1}\\
{\bf 0}_{2n-1}&\cdots&-{\bf 1}_{R,2n-1}&-{\bf 1}_{L,2n-1}\end{array}
\right)_{[(2n-1)\times k]\times k},
\end{equation*}
and
\begin{equation*}
B^{1,t}_{21}=\frac{\sigma_3\cos\frac{\pi}{k}}{2\sin\frac{\pi}{k}}\left(\begin{array}{cccc}
{\bf 1}_{L,2n-1}&{\bf 0}_{2n-1}&\cdots&{\bf 1}_{R,2n-1}\\
{\bf 1}_{R,2n-1}&{\bf 1}_{L,2n-1}&\cdots&{\bf 0}_{2n-1}\\
{\bf 0}_{2n-1}&\ddots&\ddots&{\bf 0}_{2n-1}\\
{\bf 0}_{2n-1}&\cdots&{\bf 1}_{R,2n-1}&{\bf 1}_{L,2n-1}
\end{array}
\right)_{[(2n-1)\times k]\times k}.
\end{equation*}

\medskip

For the matrix $C_{11}^1,C_{11}^3$ are $k\times k$ circulant matrices:
\begin{equation*}
C_{11}^1=\operatorname{Cir}\{(C_{11,0}^1,C_{11,1}^1,0\cdots, 0,C_{11,k-1}^1)\},
\end{equation*}
\begin{equation*}
C_{11,0}^1=-\frac{\sigma_1}{\ell}-\frac{\sigma_3}{\sin\frac{\pi}{k}}(\cos^2\frac{\pi}{k}+\frac{\sigma_2}{\ell}\sin^2\frac{\pi}{k}),
\end{equation*}
\begin{equation*}
C_{11,1}^1=C_{11,k-1}^1= \frac{\sigma_3}{2\sin\frac{\pi}{k}}(\cos^2\frac{\pi}{k}-\frac{\sigma_2}{\ell}\sin^2\frac{\pi}{k}),
\end{equation*}
and
\begin{equation*}
C_{11}^3=\operatorname{Cir}\{(-\frac{\sigma_1}{\ell}+\frac{\sigma_3}{\sin\frac{\pi}{k}}(\cos^2\frac{\pi}{k}+\frac{\sigma_2}{\ell}\sin^2\frac{\pi}{k}),0,\cdots,0)\}.
\end{equation*}

\begin{equation*}
C_{12}^1=\left(\begin{array}{cccc}
{\bf \frac{\sigma_1}{\ell}}_{L,m-1}&{\bf 0}_{m-1}&\cdots&{\bf 0}_{m-1}\\
{\bf 0}_{m-1}&{\bf \frac{\sigma_1}{\ell}}_{L,m-1}&\cdots&{\bf 0}_{m-1}\\
\vdots&\ddots&\ddots&\vdots\\
{\bf 0}_{m-1}&\cdots&\cdots&{\bf \frac{\sigma_1}{\ell}}_{L,m-1}
\end{array}
\right)_{[(m-1)\times k]\times k}
\end{equation*}

\begin{equation*}
C_{12}^2=\left(\begin{array}{cccccccc}
{\bf \frac{\sigma_1}{\ell}}_{R,m-1}&{\bf 0}_{m-1}&\cdots&{\bf 0}_{m-1}\\
{\bf 0}_{m-1}&{\bf \frac{\sigma_1}{\ell}}_{R,m-1}&\cdots&{\bf 0}_{m-1}\\
\vdots&\ddots&\ddots&\vdots\\
{\bf 0}_{m-1}&\cdots&\cdots&{\bf \frac{\sigma_1}{\ell}}_{R,m-1}
\end{array}
\right)_{[(m-1)\times k]\times k},
\end{equation*}
\begin{equation*}
C_{12}^3=\left(\begin{array}{cccc}
{\bf \frac{\sigma_2\sigma_3}{2\ell}}_{L,2n-1}&{\bf 0}_{2n-1}&\cdots&-{\bf \frac{\sigma_2\sigma_3}{2\ell}}_{R,2n-1}\\
-{\bf \frac{\sigma_2\sigma_3}{2\ell}}_{R,2n-1}&{\bf \frac{\sigma_2\sigma_3}{2\ell}}_{L,2n-1}&\cdots&{\bf 0}_{2n-1}\\
{\bf 0}_{2n-1}&\ddots&\ddots&{\bf 0}_{2n-1}\\
{\bf 0}_{2n-1}&\cdots&-{\bf \frac{\sigma_2\sigma_3}{2\ell}}_{R,2n-1}&{\bf \frac{\sigma_2\sigma_3}{2\ell}}_{L,2n-1}
\end{array}
\right)_{[(2n-1)\times k]\times k},
\end{equation*}

and
\begin{equation*}
C_{13}^1=\left(\begin{array}{cccc}
-\frac{\sigma_1}{\ell}T_{m-1}&0&\cdots&0\\
0&-\frac{\sigma_1}{\ell}T_{m-1}&\cdots&0\\
\vdots&\vdots&\vdots&-\frac{\sigma_1}{\ell}T_{m-1}\\
\end{array}
\right),
\end{equation*}
and
\begin{equation*}
C_{13}^2=\left(\begin{array}{cccc}
\frac{\sigma_2\sigma_3}{2\ell\sin\frac{\pi}{k}}T_{2n-1}&0&\cdots&0\\
0&\frac{\sigma_2\sigma_3}{2\ell\sin\frac{\pi}{k}}T_{2n-1}&\cdots&0\\
\vdots&\vdots&\vdots&\frac{\sigma_2\sigma_3}{2\ell\sin\frac{\pi}{k}}T_{2n-1}
\end{array}
\right).
\end{equation*}

\medskip

First by considering the third and fourth rows of $\bar{M}_1{\bf a}=0$ in the form (\ref{M1}), one can get that
\begin{equation}
(-T_{m-1}+O(e^{-\xi\ell})\left(\begin{array}{c}
a_{2,1}^i\\ a_{3,1}^i
\vdots\\
a_{m-1,1}^i\\
a_{m,1}^i
\end{array}
\right)+\left(\begin{array}{c}
a_{1,1}^i\\
0\\
\vdots\\
0\\
a_{m+1,1}^i
\end{array}
\right)=O(e^{-\xi\ell}){\bf a}_v,
\end{equation}
\begin{equation}
T_{2n-1}\left(\begin{array}{c}
a_{m+2,1}^i\\ a_{m+3,1}^i \\ \vdots \\ a_{m+2n-1,1}^i\\ a_{m+2n,1}^i
\end{array}
\right)+\sin\frac{\pi}{k}\left(\begin{array}{c}
a_{m+1,1}^i\\ 0\\ \vdots \\ 0\\ -a_{m+1,1}^{i+1}
\end{array}\right)
+\cos\frac{\pi}{k}\left(\begin{array}{c}
a_{m+1,2}^i\\0\\ \vdots\\
0\\ a_{m+1,2}^i
\end{array}\right)=O(e^{-\xi\ell}){\bf a}_{v}
\end{equation}
From the seventh and eighth rows of matrix $\bar{M}_1$ in (\ref{M1}), we can get that
\begin{equation}
(-T_{m-1}+O(e^{-\xi\ell})\left(\begin{array}{c}
a_{2,2}^i\\
a_{3,2}^i\\
\vdots\\
a_{m-1,2}^i\\
a_{m,2}^i
\end{array}
\right)+\left(\begin{array}{c}
a_{1,2}^i\\
0\\
\vdots\\
0\\
a_{m+1,2}^i
\end{array}
\right)=O(e^{-\xi\ell}){\bf a}_v,
\end{equation}
\begin{equation}
T_{2n-1}\left(\begin{array}{c}
a_{m+2,}^i\\ a_{m+3,2}^i \\ \vdots \\ a_{m+2n-1,2}^i\\ a_{m+2n,2}^i
\end{array}
\right)+\sin\frac{\pi}{k}\left(\begin{array}{c}
a_{m+1,2}^i\\ 0\\ \vdots \\ 0\\ -a_{m+1,2}^{i+1}
\end{array}\right)
-\cos\frac{\pi}{k}\left(\begin{array}{c}
a_{m+1,1}^i\\0\\ \vdots\\
0\\ a_{m+1,1}^i
\end{array}\right)=O(e^{-\xi\ell}){\bf a}_{v}.
\end{equation}
From the above four systems, using (\ref{defRexplicit}), one can solve ${\bf a}_{Y_1,j}^i, {\bf a}_{Y_2,j}^i$ in terms of ${\bf a}_{v}$ for $i=0,\cdots,k-1$, $j=1,2$, in particular we can get that
\begin{equation}\label{eq407}
\left\{\begin{array}{l}
a_{2,1}^i=\frac{1}{m}((m-1)a_{1,1}^i+a_{m+1,1}^i)+O(e^{-\xi\ell}){\bf a}_{v},\\
\\
a_{m,2}^i=\frac{1}{m}(a_{1,1}^i+(m-1)a_{m+1,1}^i)+O(e^{-\xi\ell}){\bf a}_{v},
\end{array}
\right.
\end{equation}
and
\begin{equation}\label{eq408}
\left\{\begin{array}{l}
a_{m+2,1}^i=-\frac{\sin\frac{\pi}{k}}{2n}((2n-1)a_{m+1,1}^i-a_{m+1,1}^{i+1})\\
\ \ \ \ \ \ \ \ \ \ -\frac{\cos\frac{\pi}{k}}{2n}((2n-1)a_{m+1,2}^i+a_{m+1,2}^{i+1})+O(e^{-\xi\ell}){\bf a}_{v}\\
\\
a_{m+2n,1}^i=-\frac{\sin\frac{\pi}{k}}{2n}(a_{m+1,1}^i-(2n-1)a_{m+1,1}^{i+1})\\
\ \ \ \ \ \ \ \ \ \ -\frac{\cos\frac{\pi}{k}}{2n}(a_{m+1,2}^i+(2n-1)a_{m+1,2}^{i+1})+O(e^{-\xi\ell}){\bf a}_{v},
\end{array}
\right.
\end{equation}
and
\begin{equation}\label{eq409}
\left\{\begin{array}{l}
a_{2,2}^i=\frac{1}{m}((m-1)a_{1,2}^i+a_{m+1,2}^i)+O(e^{-\xi\ell}){\bf a}_{v},\\
\\
a_{m,2}^i=\frac{1}{m}(a_{1,2}^i+(m-1)a_{m+1,2}^i)+O(e^{-\xi\ell}){\bf a}_{v},
\end{array}
\right.
\end{equation}
and
\begin{equation}\label{eq410}
\left\{\begin{array}{l}
a_{m+2,2}^i=-\frac{\sin\frac{\pi}{k}}{2n}((2n-1)a_{m+1,2}^i-a_{m+1,2}^{i+1})\\
\ \ \ \ \ \ \ \ \ \ +\frac{\cos\frac{\pi}{k}}{2n}((2n-1)a_{m+1,1}^i+a_{m+1,1}^{i+1})+O(e^{-\xi\ell}){\bf a}_{v}\\
\\
a_{m+2n,2}^i=-\frac{\sin\frac{\pi}{k}}{2n}(a_{m+1,2}^i-(2n-1)a_{m+1,2}^{i+1})\\
\ \ \ \ \ \ \ \ \ \ +\frac{\cos\frac{\pi}{k}}{2n}(a_{m+1,1}^i+(2n-1)a_{m+1,1}^{i+1})+O(e^{-\xi\ell}){\bf a}_{v}.
\end{array}
\right.
\end{equation}
From the first and second rows of the equation in (\ref{M1}), one can get that
\begin{equation}\label{eq411}
A_{11}^1\left(\begin{array}{c}
a_{1,1}^0\\ \vdots \\ a_{1,1}^{k-1}
\end{array}\right)+\left(\begin{array}{c}
a_{2,1}^0\\ \vdots\\ a_{2,1}^{k-1}
\end{array}
\right)+\frac{\sigma_3\cos\frac{\pi}{k}}{2}(1+\frac{\sigma_2}{\ell})\left(\begin{array}{c}
a_{1,2}^1-a_{1,2}^{k-1}\\ a_{1,2}^2-a_{1,2}^0 \\ \vdots \\ a_{1,2}^0-a_{1,2}^{k-2}\end{array}\right)=O(e^{-\xi\ell}){\bf a}_{v},
\end{equation}
\begin{eqnarray}\label{eq412}
A_{11}^3\left(\begin{array}{c}
a_{m+1,1}^0\\ \vdots\\ a_{m+1,1}^{k-1}\end{array}\right)+\left(\begin{array}{c}
a_{m,1}^0\\ \vdots\\ a_{m,1}^{k-1}\end{array}\right)&&+\frac{\sigma_3}{2}\left(\begin{array}{c}
a_{m+2,1}^0-a_{m+2n,1}^{k-1}\\ a_{m+2,1}^1-a_{m+2n,1}^0\\ \vdots\\ a_{m+2,1}^{k-1}-a_{m+2n,1}^{k-2}\end{array}
\right)\\
&&-\frac{\sigma_2\sigma_3\cos\frac{\pi}{k}}{2\ell \sin\frac{\pi}{k}}\left(\begin{array}{c}
a_{m+2,2}^0+a_{m+2n,2}^{k-1}\\ a_{m+2,2}^1+a_{m+2n,2}^0\\ \vdots\\ a_{m+2,2}^{k-1}+a_{m+2n,2}^{k-2}\end{array}
\right)=O(e^{-\xi\ell}){\bf a}_{v}.\nonumber
\end{eqnarray}
From the fifth and sixth rows of (\ref{M1}), one can get that
\begin{equation}\label{eq413}
C_{11}^1\left(\begin{array}{c}
a_{1,2}^0\\ \vdots \\ a_{1,2}^{k-1}
\end{array}\right)+\frac{c_1}{\ell}\left(\begin{array}{c}
a_{2,2}^0\\ \vdots\\ a_{2,2}^{k-1}
\end{array}
\right)-\frac{\sigma_3\cos\frac{\pi}{k}}{2}(1+\frac{c_2}{\ell})\left(\begin{array}{c}
a_{1,1}^1-a_{1,1}^{k-1}\\ a_{1,1}^2-a_{1,1}^0 \\ \vdots \\ a_{1,1}^0-a_{1,1}^{k-2}\end{array}\right)=O(e^{-\xi\ell}){\bf a}_{v},
\end{equation}
\begin{eqnarray}\label{eq414}
C_{11}^3\left(\begin{array}{c}
a_{m+1,2}^0\\ \vdots\\ a_{m+1,2}^{k-1}\end{array}\right)+\frac{\sigma_1}{\ell}\left(\begin{array}{c}
a_{m,2}^0\\ \vdots\\ a_{m,2}^{k-1}\end{array}\right)&&+\frac{\sigma_2\sigma_3}{2\ell}\left(\begin{array}{c}
a_{m+2,2}^0-a_{m+2n,2}^{k-1} \\ a_{m+2,2}^1-a_{m+2n,2}^0\\ \vdots\\ a_{m+2,2}^{k-1}-a_{m+2n,2}^{k-2}\end{array}
\right)\\
&&+\frac{\sigma_3\cos\frac{\pi}{k}}{2\sin\frac{\pi}{k}}\left(\begin{array}{c}
a_{m+2,1}^0+a_{m+2n,1}^{k-1}\\ a_{m+2,1}^1+a_{m+2n,1}^0\\ \vdots\\ a_{m+2,1}^{k-1}+a_{m+2n,1}^{k-2}\end{array}
\right)=O(e^{-\xi\ell}){\bf a}_{v}\nonumber
\end{eqnarray}
Using the equations for $a_{2,j}^i,a_{m,j}^i$ and $a_{m+2,j}^i,a_{m+2n,j}^i$ (\ref{eq407}-(\ref{eq410})), the above system (\ref{eq411})-(\ref{eq414}) can be reduced to $4k$ equations in terms of $4k$ unknowns $a_{1,1}^0,\cdots,a_{1,1}^{k-1}$, $a_{m+1,1}^0,\cdots,a_{m+1,1}^{k-1}$, $a_{1,2}^0,\cdots,a_{1,2}^{k-1}$, $a_{m+1,2}^0,\cdots,a_{m+1,2}^{k-1}$:
\begin{equation}
\left(\begin{array}{cccc}
F_{11}&F_{12}&F_{13}&0\\
F_{12}^t&F_{22}&0&F_{24}\\
F_{13}^t&0&F_{33}&F_{34}\\
0&F_{24}^t&F_{34}^t&F_{44}
\end{array}\right)\left(\begin{array}{c}
a_{1,1}^0\\ \vdots\\ a_{1,1}^{k-1}\\ a_{m+1,1}^0 \\ \vdots\\ a_{m+1,1}^{k-1}\\ a_{1,2}^0\\ \vdots \\ a_{1,2}^{k-1} \\ a_{m+1,2}^0 \\ \vdots \\ a_{m+1,2}^{k-1}
\end{array}
\right)=O(e^{-\xi\ell}){\bf a}_{v}
\end{equation}
where $F_{ij}$ are $k\times k$ circulant matrices given below:

\medskip
\noindent
{\bf The matrix $F_{11}$}. $F_{11}$ is defined by
\begin{equation}
F_{11}=\operatorname{Cir}\{(F_{11,0},F_{11,1},0,\cdots,0,F_{11,k-1})\}
\end{equation}
where
\begin{equation*}
F_{11,0}=-\frac{1}{m}-\frac{\sigma_3}{\sin\frac{\pi}{k}}(\sin^2\frac{\pi}{k}+\frac{\sigma_2}{\ell}\cos^2\frac{\pi}{k}),
\end{equation*}
and
\begin{equation*}
F_{11,1}=F_{11,k-1}=\frac{\sigma_3}{2\sin\frac{\pi}{k}}(-\sin^2\frac{\pi}{k}+\frac{\sigma_2}{\ell}\cos^2\frac{\pi}{k})
\end{equation*}

\noindent
{\bf Eigenvalues of $F_{11}$}. For any $l=0,\cdots,k-1$, the eigenvalues of $F_{11}$ are
\begin{equation*}
f_{11,l}=-\frac{1}{m}-\frac{\sigma_3}{\sin\frac{\pi}{k}}(\sin^2\frac{\pi}{k}+\frac{\sigma_2}{\ell}\cos^2\frac{\pi}{k})+\frac{\sigma_3}{\sin\frac{\pi}{k}}(-\sin^2\frac{\pi}{k}+\frac{\sigma_2}{\ell}\cos^2\frac{\pi}{k})\cos\frac{2l\pi}{k}.
\end{equation*}

\medskip
\noindent
{\bf The matrix $F_{12}$}. The matrix $F_{12}$ is defined by
\begin{equation*}
F_{12}=\operatorname{Cir}\{(\frac{1}{m},0,\cdots,0)\}
\end{equation*}

\noindent
{\bf Eigenvalues of $F_{12}$}. For any $l=0,\cdots,k-1$, the eigenvalues of $F_{12}$ are
\begin{equation*}
f_{12,l}=\frac{1}{m}.
\end{equation*}

\medskip
\noindent
{\bf The matrix $F_{13}$}.  The matrix $F_{13}$ is defined by
\begin{equation*}
F_{13}=\operatorname{Cir}\{(0,F_{13,1},0,\cdots,0,F_{13,k-1})\}
\end{equation*}
where
\begin{equation*}
F_{13,1}=\frac{\sigma_3\cos\frac{\pi}{k}}{2}(1+\frac{\sigma_2}{\ell}), \ F_{13,k-1}=-\frac{\sigma_3\cos\frac{\pi}{k}}{2}(1+\frac{\sigma_2}{\ell}).
\end{equation*}

\noindent
{\bf Eigenvalues of $F_{13}$}. For any $l=0,\cdots,k-1$, the eigenvalues of $F_{12}$ are
\begin{equation}
f_{13,l}=i\sigma_3(1+\frac{\sigma_2}{\ell})\cos\frac{\pi}{k}\sin\frac{2l\pi}{k}
\end{equation}

\medskip
\noindent
{\bf The matrix $F_{22}$}. The matrix $F_{22}$ is defined by
\begin{equation*}
F_{22}=\operatorname{Cir}\{(F_{22,0}, F_{22,1},0,\cdots,0,F_{22,k-1})\}
\end{equation*}
where
\begin{equation*}
F_{22,0}=-\frac{1}{m}+\frac{\sigma_3}{2n\sin\frac{\pi}{k}}(\sin^2\frac{\pi}{k}+\frac{\sigma_2}{\ell}\cos^2\frac{\pi}{k})
\end{equation*}
and
\begin{equation*}
F_{22,1}=F_{22,k-1}=\frac{\sigma_3}{4n\sin\frac{\pi}{k}}(\sin^2\frac{\pi}{k}-\frac{\sigma_2}{\ell}\cos^2\frac{\pi}{k}).
\end{equation*}

\noindent
{\bf Eigenvalue of $F_{22}$}. For any $l=0,\cdots,k-1$, the eigenvalues of $F_{22}$ are

\begin{equation}
f_{22,l}=-\frac{1}{m}+\frac{\sigma_3}{2n\sin\frac{\pi}{k}}(\sin^2\frac{\pi}{k}+\frac{\sigma_2}{\ell}\cos^2\frac{\pi}{k})+\frac{\sigma_3}{2n\sin\frac{\pi}{k}}(\sin^2\frac{\pi}{k}-\frac{\sigma_2}{\ell}\cos^2\frac{\pi}{k})\cos\frac{2l\pi}{k}.
\end{equation}

\medskip
\noindent
{\bf The matrix $F_{24}$}. The matrix $F_{24}$ is defined by
\begin{equation*}
F_{24}=\operatorname{Cir}\{(0, F_{24,1},0,\cdots,0,F_{24,k-1})\}
\end{equation*}
where
\begin{equation*}
F_{24,1}=-\frac{\sigma_3\sin\frac{\pi}{k}}{4n}(1+\frac{\sigma_2}{\ell}), \ F_{24,k-1}=\frac{\sigma_3\sin\frac{\pi}{k}}{4n}(1+\frac{\sigma_2}{\ell}).
\end{equation*}

\noindent
{\bf Eigenvalue of $F_{24}$}. For any $l=0,\cdots,k-1$, the eigenvalues of $F_{24}$ are

\begin{equation*}
f_{24,l}=-\frac{i\sigma_3}{2n}(1+\frac{\sigma_2}{\ell})\cos\frac{\pi}{k}\sin\frac{2l\pi}{k}.
\end{equation*}

\medskip
\noindent
{\bf The matrix $F_{33}$}. The matrix $F_{33}$ is defined by
\begin{equation*}
F_{33}=\operatorname{Cir}\{(F_{33,0},F_{33,1},0,\cdots,0,F_{33,k-1})\}
\end{equation*}
where
\begin{equation*}
F_{33,0}=-\frac{\sigma_1}{m\ell}-\frac{\sigma_3}{\sin\frac{\pi}{k}}(\cos^2\frac{\pi}{k}+\frac{\sigma_2}{\ell}\sin^2\frac{\pi}{k}),
\end{equation*}
and
\begin{equation*}
F_{33,1}=F_{33,k-1}=\frac{\sigma_3}{2\sin\frac{\pi}{k}}(\cos^2\frac{\pi}{k}-\frac{\sigma_2}{\ell}\sin^2\frac{\pi}{k}).
\end{equation*}

\noindent
{\bf Eigenvalue of $F_{33}$}.  For any $l=0,\cdots,k-1$, the eigenvalues of $F_{33}$ are
\begin{equation*}
f_{33,l}=-\frac{\sigma_1}{m\ell}-\frac{\sigma_3}{\sin\frac{\pi}{k}}(\cos^2\frac{\pi}{k}+\frac{\sigma_2}{\ell}\sin^2\frac{\pi}{k})+\frac{\sigma_3}{\sin\frac{\pi}{k}}(\cos^2\frac{\pi}{k}-\frac{\sigma_2}{\ell}\sin^2\frac{\pi}{k})\cos\frac{2l\pi}{k}.
\end{equation*}

\medskip
\noindent
{\bf The matrix $F_{34}$}. The matrix $F_{34}$ is defined by
\begin{equation*}
F_{34}=\operatorname{Cir}\{(\frac{c_1}{m\ell},0,\cdots,0)\}.
\end{equation*}

\noindent
{\bf Eigenvalues of $F_{34}$}. For any $l=0,\cdots,k-1$, the eigenvalues of $F_{34}$ are
\begin{equation*}
f_{34,l}=\frac{\sigma_1}{m\ell}.
\end{equation*}

\medskip
\noindent
{\bf The matrix $F_{44}$}. The matrix $F_{44}$ is defined by
\begin{equation*}
F_{44}=\operatorname{Cir}\{(F_{44,0}, F_{44,1},0,\cdots,0,F_{44,k-1})\}
\end{equation*}
where
\begin{equation*}
F_{44,0}=-\frac{\sigma_1}{m\ell}+\frac{\sigma_3}{2n\sin\frac{\pi}{k}}(\cos^2\frac{\pi}{k}+\frac{\sigma_2}{\ell}\sin^2\frac{\pi}{k}),
\end{equation*}
and
\begin{equation*}
F_{44,1}=F_{44,k-1}=-\frac{\sigma_3}{4n\sin\frac{\pi}{k}}(\cos^2\frac{\pi}{k}-\frac{\sigma_2}{\ell}\sin^2\frac{\pi}{k}).
\end{equation*}

\noindent
{\bf Eigenvalue of $F_{44}$}. For any $l=0,\cdots,k-1$, the eigenvalues of $F_{44}$ are
\begin{equation*}
f_{44,l}=-\frac{\sigma_1}{m\ell}+\frac{\sigma_3}{2n\sin\frac{\pi}{k}}(\cos^2\frac{\pi}{k}+\frac{\sigma_2}{\ell}\sin^2\frac{\pi}{k})-\frac{\sigma_3}{2n\sin\frac{\pi}{k}}(\cos^2\frac{\pi}{k}-\frac{\sigma_2}{\ell}\sin^2\frac{\pi}{k})\cos\frac{2l\pi}{k}.
\end{equation*}

The last part of this section is devoted to the analysis of the matrix
\begin{equation}
F=\left(\begin{array}{cccc}
F_{11}&F_{12}&F_{13}&0\\
F_{12}^t&F_{22}&0&F_{24}\\
F_{13}^t&0&F_{33}&F_{34}\\
0&F_{24}^t&F_{34}^t&F_{44}
\end{array}\right).
\end{equation}

Define
\begin{equation*}
\mathcal{P}_1=\left(\begin{array}{cccc}
P&0&0&0\\
0&P&0&0\\
0&0&P&0\\
0&00&0&P
\end{array}
\right)
\end{equation*}
where $P$ is defined in (\ref{P})
A simple algebra gives that
\begin{equation*}
F=\mathcal{P}_1\left(\begin{array}{cccc}
D_{F_{11}}&D_{F_{12}}&D_{F_{13}}&0\\
D_{F_{12}^t}&D_{F_{22}}&0&D_{F_{24}}\\
D_{F_{13}^t}&0&D_{F_{33}}&D_{F_{34}}\\
0&D_{F_{24}^t}&D_{F_{34}^t}&D_{F_{44}}
\end{array}\right)\mathcal{P}_1^t.
\end{equation*}
Here $D_X$ denotes the diagonal matrix of dimension $k\times k$  whose entrances are given by the eigenvalues of $X$.

Let us now introduce the following matrix
\begin{equation*}
\mathcal{D}_F=\left(\begin{array}{cccc}
\mathcal{D}_{f_0}&0&\cdots&0\\
0&\mathcal{D}_{f_1}&0&\cdots\\
\cdots&\cdots&\cdots&\cdots\\
\cdots&0&0&\mathcal{D}_{f_{k-1}}
\end{array}
\right)
\end{equation*}
where
\begin{equation*}
\mathcal{D}_{f_i}=\left(\begin{array}{cccc}
f_{11,i}&f_{12,i}&f_{13,i}&0\\
f_{12,i}&f_{22,i}&0&f_{24,i}\\
-f_{13,i}&0&f_{33,i}&f_{34,i}\\
0&-f_{24,i}&f_{34,i}&f_{44,i}
\end{array}
\right).
\end{equation*}

%\begin{eqnarray*}
%\mathcal{D}_{f_0}&&=\left(\begin{array}{cccc}
%-\frac{1}{m}-2\sigma_3\sin\theta_0&\frac{1}{m}&0&0\\
%\frac{1}{m}&-\frac{1}{m}+\frac{\sigma_3\sin\theta_0}{n}&0&0\\
%0&0&-\frac{\sigma_1}{m\ell}-\frac{2\sigma_2\sigma_3\sin\theta_0}{\ell}&\frac{\sigma_1}{m\ell}\\
%0&0&\frac{\sigma_1}{m\ell}&-\frac{\sigma_1}{m\ell}+\frac{\sigma_2\sigma_3\sin\theta_0}{n\ell}
%\end{array}
%\right)\\
%&&=\left(\begin{array}{cc}
%F_1&0\\
%0&F_2
%\end{array}\right)
%\end{eqnarray*}

%\begin{equation}
%\operatorname{Det}(F_1)=0
%\end{equation}
%and
%\begin{equation}
%\operatorname{Det}(F_2)=-\frac{2\sigma_1\sigma_2\sigma_3\sin^2\theta_0}{n\ell^3}(\ln (2\sin\theta_0)+O(\frac{1}{\ell})),
%\end{equation}
By direct calculation, one can check that for $j=0$
\begin{equation*}
\operatorname{Det}(\mathcal{D}_{f_1})=0
\end{equation*}
and $\mathcal{D}_{f_1}$ has only one kernel. The other eigenvalues of $\mathcal{D}_{f_1}$ will satisfy $|\lambda_{1,i}|\geq \frac{C}{\ell^\tau}$ for some constant $C,\tau>0$.

For $j\geq 2$, we have
\begin{eqnarray*}
\operatorname{Det}(\mathcal{D}_{f_j})&&=\frac{n}{2\sigma_3^2(1+\bar{d}_2)^2(1-\bar{a})(1-\bar{b}_j)\bar{b}_j(\bar{a}(1-\bar{b}_j)+\bar{d}_2(1-\bar{a})\bar{d}_j)(\bar{b}_j-\frac{(1-\bar{a})\bar{b}_j\bar{d}_2}{\bar{a}})}\\
&&\times\frac{(\bar{a}-\bar{b}_j)^2\bar{d}_2(\bar{b}_j\bar{d}_2+\bar{a}^2(1+\bar{d}_1)(1+\bar{d}_2)-\bar{a}(1+(2+\bar{d}_1)\bar{d}_2))}{\bar{a}\bar{b}_j(1-\bar{a})(1-\bar{b}_j)(1+\bar{d}_2)^2(\bar{a}+(\bar{a}-1)\bar{d}_2)(\bar{a}(\bar{b}_j-1)+(\bar{a}-1)\bar{b}_j\bar{d}_2)}\\
&&=\frac{(\bar{a}-\bar{b}_j)^2n\bar{d}_2}{2c_3^2\bar{a}^3\bar{b}_j^3(1-\bar{a})(\bar{b}_j-1)^4}(1+O(\frac{1}{\ell})),
\end{eqnarray*}
where
\begin{equation*}
\bar{a}=\sin^2\frac{\pi}{k}, \ \bar{b}_j=\sin^2\frac{j\pi}{k}, \ \bar{d}_1=\frac{c_1}{\ell}, \ \bar{d}_2=\frac{c_2}{\ell}.
\end{equation*}

From the above computation, we know that for $j=1,k-1$, $\operatorname{Det}(\mathcal{D}_{f_j})=0$, and the matrix $\mathcal{D}_{f_j}$ has one kernel and all the other eigenvalues has a lower bounded $\frac{C}{\ell^\tau}$. For $j\neq 0,1,k-1$, the matrix $\mathcal{D}_{f_j}$ is non-degenerate, and the eigenvalues has a lower bounded $\frac{C}{\ell^\tau}$.

From the above analysis, we know that $\bar{M}_1$ has there kernels. And the eigenvalues of $\bar{M}_1$ has a lower bounded $\frac{C}{\ell^\tau}$. Moreover, we can check that ${\bf w}_1,{\bf w}_2,{\bf w}_3$ are in the kernels of $\bar{M}_1$. So we proved part (b).

\setcounter{equation}{0}
\section{Proof of Proposition \ref{pro303}}\label{sec5}
In this section, we prove Proposition \ref{pro303}. A key ingredient in the proof is the estimates on the right hand side of (\ref{eq313}). We have
\begin{proposition}\label{pro501}
There exists positive constants $C$ and $\xi$ such that for $\alpha=1,\cdots,N$,
\begin{equation}\label{eq503}
\|{\bf r}_\alpha\|\leq Ce^{-\frac{1+\xi}{2}\ell}\|\varphi^\perp\|_*
\end{equation}
for any $\ell$ sufficiently large.
\end{proposition}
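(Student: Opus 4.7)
The plan is to exploit the self-adjointness of $L = \Delta - 1 + p|u|^{p-1}$. Integration by parts (justified by the exponential decay of $Z_{j,\alpha}^i$ away from $R_k^i y_j$ and the boundedness of $\varphi^\perp$) yields
\begin{equation*}
\int L(\varphi^\perp)\, Z_{j,\alpha}^i\, dx \;=\; \int \varphi^\perp\, L(Z_{j,\alpha}^i)\, dx.
\end{equation*}
The key point is that (\ref{eq308}) already provides a sharp bound $\|L(Z_{j,\alpha}^i)\|_* \le C e^{-\frac{1+\xi}{2}\ell}$: indeed each $Z_{j,\alpha}^i$ is essentially a derivative of a translate of $w$ (modulo the $\pi_\alpha/k$ correction introduced near the vertex indices $j \in \{1, m+1\}$), and therefore lies in the kernel of the frozen linearization $L_0(\,\cdot\, - R_k^i y_j)$; the residual comes from $p(|u|^{p-1} - w^{p-1}(x - R_k^i y_j))\, Z_{j,\alpha}^i$, together with the contribution involving $L(\pi_\alpha)$ controlled via Proposition \ref{pro1}.

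By the pointwise interpretation of (\ref{norm}), both $\varphi^\perp$ and $L(Z_{j,\alpha}^i)$ are dominated by a multiple of $W(x) := \sum_{y \in \Pi} e^{\eta |x-y|}$ with $\eta \in (-1,0)$, so that
\begin{equation*}
\left|\int \varphi^\perp\, L(Z_{j,\alpha}^i)\, dx \right| \;\le\; \|\varphi^\perp\|_*\, \|L(Z_{j,\alpha}^i)\|_*\, \int_{\R^N} W(x)^2\, dx.
\end{equation*}
Expanding the square and using that any two distinct points of $\Pi$ are at distance at least $c\ell$, a splitting $|x-y|+|x-y'| \ge \tfrac{1}{2}(|x-y|+|x-y'|) + \tfrac{1}{2}|y-y'|$ combined with Cauchy--Schwarz gives
\begin{equation*}
\int_{\R^N} e^{\eta(|x-y|+|x-y'|)}\, dx \;\le\; C\, e^{\eta |y-y'|/2},
\end{equation*}
so that $\int W^2 \le C(\#\Pi) + C(\#\Pi)^2 e^{\eta c\ell/2} \le C\ell^{\tau_0}$, since $\#\Pi = (2n+m)k = O(\ell)$ while $k$ is fixed. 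This furnishes the component-wise bound
\begin{equation*}
\left|\int L(\varphi^\perp)\, Z_{j,\alpha}^i\, dx \right| \;\le\; C\, \ell^{\tau_0}\, e^{-\frac{1+\xi}{2}\ell}\, \|\varphi^\perp\|_*.
\end{equation*}

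Since ${\bf r}_\alpha$ has $(2n+m)k = O(\ell)$ components, summing squares and taking roots introduces only an additional polynomial factor, giving $\|{\bf r}_\alpha\| \le C \ell^{\tau_1}\, e^{-\frac{1+\xi}{2}\ell}\, \|\varphi^\perp\|_*$. The polynomial growth is then absorbed into a slightly smaller exponential rate $e^{-\frac{1+\tilde\xi}{2}\ell}$ with $0 < \tilde\xi < \xi$, which produces (\ref{eq503}). The one non-trivial point is the verification that (\ref{eq308}) persists in the presence of the $\pi_\alpha/k$ corrections in $Z_{1,\alpha}^i$ and $Z_{m+1,\alpha}^i$; this reduces to applying $L$ to $\pi_\alpha = \partial_{x_\alpha}\phi$, which requires differentiating the nonlinear equation satisfied by the Musso--Pacard--Wei perturbation $\phi$ and exploiting (\ref{pialpha}) together with the decay $\|\phi\|_* \le C e^{-\frac{1+\xi}{2}\ell}$. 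Once this is in place, the remainder of the argument is routine bookkeeping.
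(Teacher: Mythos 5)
Your proposal is correct and follows essentially the same route as the paper: integrate by parts using self-adjointness of $L$, invoke (\ref{eq308}) (which already accounts for the $\pi_\alpha/k$ correction via Proposition~\ref{pro1}), and pair against $\varphi^\perp$. The only organizational difference is in the last step: the paper bounds the integral directly using the localization of $L(Z_{j,\alpha}^i)$ near $R_k^i y_j$, while you pair the two $\|\cdot\|_*$-bounds and estimate $\int W^2 = O(\ell)$ separately; both introduce at most a polynomial-in-$\ell$ factor that is absorbed by slightly reducing $\xi$, exactly as you note. One small inaccuracy: the $\pi_\alpha/k$ correction appears only in $Z_{1,\alpha}^i$ (the $j=1$ index), not in $Z_{m+1,\alpha}^i$, since $Z_{j,\alpha}^i = \tilde Z_{j,\alpha}^i$ for all $j\geq 2$; this has no effect on the argument.
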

\begin{proof}
Recall that
\begin{equation*}
 {\bf r}_\alpha=\left(\begin{array}{c}
\int L(\varphi^\perp)Z_{v,\alpha}\\
\int L(\varphi^\perp) Z_{Y_1,\alpha}^0\\
\vdots\\
\int L(\varphi^\perp)Z_{Y_1,\alpha}^{k-1}\\
\int L(\varphi^\perp)Z_{Y_2,\alpha}^0\\
\vdots\\
\int L(\psi^\perp)Z_{Y_2,\alpha}^{k-1}
\end{array}
\right).
\end{equation*}
The estimate follows from
\begin{equation*}
|\int L(\varphi^\perp)Z_{j,\alpha}^i|\leq Ce^{-\frac{1+\xi}{2}\ell}\|\varphi^\perp\|_*.
\end{equation*}
To prove the above estimate, we fix for example $j=1,i=0,\alpha=3$ and we write
\begin{eqnarray*}
\int L(\varphi^\perp)Z_{1,3}^0&&=\int L(Z_{1,3}^0)\varphi^\perp\\
&&=\int L(\frac{\partial w(x-y_1)}{\partial x_3})\varphi^\perp+L(\frac{\pi_3}{k})\varphi^\perp\\
&&=\int p(|u|^{p-1}-w^{p-1}(x-y_1))\frac{\partial w(x-y_1)}{\partial x_3}\varphi^\perp+O(e^{-\frac{1+\xi}{2}}\ell)\|\varphi^\perp\|_*\\
&&\leq C\int w^{p-2}(x-y_1)|\frac{\partial w(x-y_1)}{\partial x_3}|\sum_{z\in \Pi_{y_1}}w(x-z)+O(e^{-\frac{1+\xi}{2}}\ell)\|\varphi^\perp\|_*\\
&&\leq Ce^{-\frac{1+\xi}{2}}\ell\|\varphi^\perp\|_*\\
\end{eqnarray*}
for some $\xi>0$ independent of $\ell$ large, where we have used the estimate for $\phi$ (\ref{pialpha}),
\begin{equation*}
\|\phi\|_*\leq Ce^{-\frac{1+\xi}{2}\ell}.
\end{equation*}
We proved the estimate for $\alpha=3$. The other cases can be treated similarly.
\end{proof}

\medskip
We have now the tools for the

\noindent{\bf Proof of Proposition \ref{pro303}}. By Proposition \ref{pro401}, we only need to show the following orthogonality conditions:

\begin{equation}\label{eq501}
\left(\begin{array}{c}
{\bf r}_1\\
{\bf r}_2\
\end{array}
\right)\cdot{\bf w}_1=\left(\begin{array}{c}
{\bf r}_1\\
{\bf r}_2\
\end{array}
\right)\cdot{\bf w}_2=\left(\begin{array}{c}
{\bf r}_1\\
{\bf r}_2\
\end{array}
\right)\cdot{\bf w}_3=0
\end{equation}

and
\begin{equation}\label{eq502}
{\bf r}_\alpha\cdot{\bf w}_4={\bf r}_\alpha\cdot{\bf w}_5={\bf r}_\alpha\cdot{\bf w}_6=0.
\end{equation}

First recall that $L(z_1)=0$, then we have
\begin{equation*}
\int L(z_1)\varphi^\perp=\int L(\varphi^\perp)z_1=0.
\end{equation*}
This gives us exactly the first orthogonality condition in (\ref{eq501}). Similarly, from $L(z_2)=0$, one can get that
\begin{equation*}
\int L(z_2)\varphi^\perp=\int L(\varphi^\perp)z_2=0.
\end{equation*}
This gives us exactly the second orthogonality condition in (\ref{eq501}).

For $\alpha=3,\cdots,N$, from $L(z_\alpha)=0$, one can get that
\begin{equation*}
\int L(z_\alpha)\varphi^\perp=\int L(\varphi^\perp)z_\alpha=0.
\end{equation*}
This gives us exactly the first orthogonality condition in (\ref{eq502}).

Second, let us recall that
\begin{equation}
L(\varphi^\perp)=-\sum_{\alpha=1}^N{\bf c}_\alpha \cdot L( {\bf Z}_\alpha ).
\end{equation}
Thus the function $x\to L(\varphi^\perp)(x)$ is invariant under the rotation of angle $\frac{2\pi}{k}$ in the $(x_1,x_2)$ plane. Thus we can get that
\begin{equation*}
\sum_{i=0}^{k-1}(\sum_{j=1}^{m+1}\int L(\varphi^\perp)|y_j|Z_{j,2}^i+\sum_{j=m+2}^{m+2n}\int L(\varphi^\perp)(R_k^iy_{j}\cdot {\bf n}_iZ_{j,1}^i-R_k^iy_j\cdot{\bf t}_iZ_{j,2}^i)=0.
\end{equation*}
This gives us the third orthogonality condition in (\ref{eq501}).

For $\alpha=3,\cdots,N$, we can get that
\begin{equation*}
\sum_{i=0}^{k-1}\sum_{j=1}^{2m+n}\int L(\varphi^\perp)Z_{j,\alpha}^iR_k^iy_j\cdot{\bf e}_1=0,
\end{equation*}
and
\begin{equation*}
\sum_{i=0}^{k-1}\sum_{j=1}^{2m+n}\int L(\varphi^\perp)Z_{j,\alpha}^iR_k^iy_j\cdot{\bf e}_2=0.
\end{equation*}
These give the last two orthogonality conditions in (\ref{eq502}).

Combing the results of Proposition in \ref{pro401} and the a priori estimates in (\ref{eq503}), we get the proof of Proposition \ref{pro303}.

\setcounter{equation}{0}
\section{Final Argument}\label{sec6}
Let $\left(\begin{array}{c}{\bf c}_1\\ \vdots\\ {\bf c}_N\end{array}\right)$ be the solutions to (\ref{eq313}) predicted by Proposition \ref{pro301}, given by
\begin{equation*}
\left(\begin{array}{c}{\bf c}_1\\{\bf c}_2\end{array}\right)=\left(\begin{array}{c}{\bf v}_1\\{\bf v}_2\end{array}\right)+s_1{\bf w}_1+s_2{\bf w}_2+s_3{\bf w}_3,
\end{equation*}
and
\begin{equation*}
{\bf c}_\alpha={\bf v}_\alpha+s_{\alpha1}{\bf w}_4+s_{\alpha2}{\bf w}_5+s_{\alpha3}{\bf w}_6, \ \alpha=3,\cdots,N.
\end{equation*}

A direct computation shows that there exists a unique
\begin{equation*}
(s_1^*,\cdots,s_3^*,s_{31}^*,\cdots,s_{N3}^*)\in \R^{3N-3}
\end{equation*}
for which the above solution satisfies all the $3N-3$ conditions of Proposition \ref{pro301}. Furthermore, one can see that
\begin{equation*}
\|(s_1^*,\cdots,s_3^*,s_{31}^*,\cdots,s_{N3}^*)\|\leq C\ell^\tau\|\varphi^\perp\|_*.
\end{equation*}

Hence there exists a unique solution $\left(\begin{array}{c}{\bf c}_1\\ \vdots\\ {\bf c}_N\end{array}\right)$ to system (\ref{eq313}), satisfying estimates in Proposition \ref{pro303} and the estimate
\begin{equation*}
\sum_{\alpha=1}^N\|{\bf c}_\alpha\|\leq C\ell^\tau e^{\frac{1-\xi}{2}\ell}\|\psi^\perp\|_*.
\end{equation*}
On the other hand, from (\ref{varphi}), we conclude that
\begin{equation*}
\|\psi^\perp\|_*\leq Ce^{-\frac{1+\xi}{2}\ell}\sum_{\alpha=1}^N\|{\bf c}_\alpha\|.
\end{equation*}
Thus we conclude that
\begin{equation*}
c_{j,\alpha}^i=0, \ \psi^\perp=0.
\end{equation*}
This proves Theorem \ref{theorem1}.

\setcounter{equation}{0}
\section{Some Useful Computations}\label{sec7}
In this section, we will perform the computations of the entrance of the matrices $A$, $B$, $C$, $H_\alpha$ for $\alpha=3,\cdots,N$.

We first introduce some useful functions
\begin{equation}
\Psi_1(\ell)=\int div(w^p(x){\bf e})div(w(x-\ell {\bf e}){\bf e})dx,
\end{equation}
\begin{equation}
 \Psi_2(\ell)=\int div(w^p(x){\bf e}^\perp)div(w(x-\ell {\bf e}){\bf e}^\perp)dx,
\end{equation}
where ${\bf e}$ is any unit vector.  It is easy to check that this definition is independent of the choice of the unit vector ${\bf e}$. It is known that
\begin{equation}
\Psi_1(\ell)=C_{N,p,1}e^{-\ell}\ell^{-\frac{N-1}{2}}(1+O(\frac{1}{\ell}))
\end{equation}
and
\begin{equation}
\Psi_2(\ell)=C_{N,p,2}e^{-\ell}\ell^{-\frac{N+1}{2}}(1+O(\frac{1}{\ell}))
\end{equation}
where $C_{N,p,i}>0$ are constants depend only on $p$ and $N$.

In fact, one can see from the definition of $\Psi_1$ and $\Psi$ that
\begin{equation}\label{eq701}
\Psi'_1(\ell)=2\sin\frac{\pi}{k}\Psi_1'(\bar{\ell})\frac{d\bar{\ell}}{d\ell}.
\end{equation}

By these two definitions, one can easily get that
\begin{eqnarray}\label{eq702}
&&\int pw^{p-1}{\bf a}\cdot w(x){\bf b}\cdot w(x-\ell{\bf e})={\bf a}\cdot{\bf e}{\bf b}\cdot{\bf e}\Psi_1(\ell)+{\bf a}\cdot {\bf e}^\perp{\bf b}\cdot{\bf e}^\perp \Psi_2(\ell).\nonumber\\
\end{eqnarray}

\medskip
\noindent
{\bf Computation of $A$}.
\begin{eqnarray*}
\int L(Z_{1,1}^0)Z_{1,1}^0dx&=&\int p(|u|^{p-1}-w^{p-1}(x-y_1))(\frac{\partial w(x-y_1)}{\partial x_1})^2\\
&=&p(p-1)\int w^{p-2}(x-y_1)(\phi+\sum_{z\in \Pi_{y_1}}w(x-z))(\frac{\partial w(x-y_1)}{\partial x_1})^2\\
&+&O(e^{-(1+\xi)\ell}).
\end{eqnarray*}
Recall that $\phi$ solves the following equation:
\begin{equation}
\Delta \phi-\phi+p|U|^{p-1}\phi+E+N(\phi)=0
\end{equation}
where
\begin{equation*}
E=\Delta U-U+|U|^{p-1}U,
\end{equation*}
and
\begin{equation*}
N(\phi)=|U+\phi|^{p-1}(U+\phi)-|U|^{p-1}U-p|U|^{p-1}\phi.
\end{equation*}
Hence we observe that
\begin{eqnarray*}
&&p(p-1)\int w^{p-2}\phi(\frac{\partial w(x-y_1)}{\partial x_1})^2\\
&&=\int \frac{\partial }{\partial x_1}(pw^{p-1}(x-y_1))\phi\frac{\partial }{\partial x_1}w(x-y_1)\\
&&=-\int pw^{p-1}(x-y_1)\frac{\partial}{\partial x_1}(\phi\frac{\partial w(x-y_1)}{\partial x_1})\\
&&=-\int pw^{p-1}(x-y_1)\frac{\partial w(x-y_1)}{\partial x_1}\frac{\partial \phi}{\partial x_1}-\int pw^{p-1}(x-y_1)\phi\frac{\partial^2 w(x-y_1)}{\partial x_1^2}\\
&&=\int[p(|U|^{p-1}-w^{p-1}(x-y_1))\phi+E+N(\phi)]\frac{\partial^2 w(x-y_1)}{\partial x_1^2}\\
&&=\int E\frac{\partial^2 w(x-y_1)}{\partial x_1^2}+O(e^{-(1+\xi)\ell})\\
&&=\int (|U|^{p-1}U-w^p(x-y_1))\frac{\partial^2 w(x-y_1)}{\partial x_1^2}+O(e^{-(1+\xi)\ell})\\
&&=\int pw^{p-1}(x-y_1)(\sum_{z\in\Pi_{y_1}}w(x-z))\frac{\partial^2 w(x-y_1)}{\partial x_1^2}+O(e^{-(1+\xi)\ell}).
\end{eqnarray*}
Taking this into account, we have
\begin{eqnarray*}
\int L(Z_{1,1}^0)Z_{1,1}^0dx&=&p(p-1)\int w^{p-2}(\sum_{z\in \Pi_{y_1}}w(x-z))(\frac{\partial w(x-y_1)}{\partial x_1})^2\\
&+&\int pw^{p-1}(x-y_1)(\sum_{z\in\Pi_{y_1}}w(x-z))\frac{\partial^2 w(x-y_1)}{\partial x_1^2}+O(e^{-(1+\xi)\ell})\\
&=&\int \frac{\partial }{\partial x_1}(pw^{p-1}(x-y_1)\frac{\partial w(x-y_1)}{\partial x_1})\sum_{z\in\Pi_{y_1}}w(x-z))+O(e^{-(1+\xi)\ell})\\
&=&-p\int w^{p-1}(x-y_1)\frac{\partial w(x-y_1)}{\partial x_1}\sum_{z\in\Pi_{y_1}}\frac{\partial w(x-z)}{\partial x_1})+O(e^{-(1+\xi)\ell}).
\end{eqnarray*}

By (\ref{eq702}), we can get that
\begin{eqnarray*}
\int L(Z_{1,\alpha}^0)Z_{1,\alpha}^0dx=-[\Psi_1(\ell)+2(\Psi_1(\bar{\ell})\sin^2\frac{\pi}{k}+\Psi_2(\bar{\ell})\cos^2\theta_0)].
\end{eqnarray*}

Next we consider
\begin{eqnarray*}
\int L(Z_{1,1}^0)Z_{1,1}^1&&=\int (p|u|^{p-1}-pw^{p-1}(x-y_1))\frac{\partial w(x-y_1)}{\partial x_1}R_k^1\cdot \nabla w(x-R_k^1y_1)\\
&&=\int pw^{p-1}(x-R_k^1y_1)\frac{\partial w(x-y_1)}{\partial x_1}R_k^1\cdot \nabla w(x-R_k^1y_1)+O(e^{-(1+\xi)\ell})\\
&&=-\sin^2\frac{\pi}{k}\Psi_1(\bar{\ell})+\cos^2\frac{\pi}{k}\Psi_2(\bar{\ell})+O(e^{-(1+\xi)\ell}).
\end{eqnarray*}
Similarly, one can get that
\begin{eqnarray*}
&&\int L(Z_{1,1}^0)Z_{1,1}^{k-1}=-\sin^2\frac{\pi}{k}\Psi_1(\bar{\ell})+\cos^2\frac{\pi}{k}\Psi_2(\bar{\ell})+O(e^{-(1+\xi)\ell}),\\
&&\int L(Z_{1,1}^0)Z_{i,1}^j=O(e^{-(1+\xi)\ell}) \mbox{ for }(i,j)\neq (1,0),(1,1),(1,k-1),(2,0).
\end{eqnarray*}

Another observation is that
\begin{equation*}
\int L(Z_{i,1}^s)Z_{j,1}^t=\int L(Z_{i,1}^0)Z_{j,1}^{t-s}
\end{equation*}
where we use the notation $Z_{j,1}^{t-s}=Z_{j,1}^{k+t-s}$ if $t-s<0$.

Moreover, for $i\geq 2$,
\begin{equation*}
\int L(Z_{i,1}^s)Z_{j,1}^t=\left\{\begin{array}{l}
-2\Psi_1(\ell)+O(e^{-(1+\xi)\ell}), \mbox{ if }i=j,s=t, i\leq m, \\
\\
\Psi_1(\ell)+O(e^{-(1+\xi)\ell}) \mbox{ if }j=i-1 \mbox{ or }i+1, s=t \ i\leq m,\\
\\
2\Psi_1(\bar{\ell})+O(e^{-(1+\xi)\ell})\mbox{ if }i=j, s=t, \ m+2\leq i\leq 2n+m,\\
\\
\Psi_1(\bar{\ell})+O(e^{-(1+\xi)\ell})\mbox{ if }j=i-1\mbox{ or }i+1, s=t,  \ m+2\leq i\leq 2n+m,\\
\\
-[\Psi_1(\ell)-2(\Psi_1(\bar{\ell})\sin^2\frac{\pi}{k}+\Psi_2(\bar{\ell})\cos^2\frac{\pi}{k})]+O(e^{-(1+\xi)\ell}) \\
\ \ \ \ \ \ \ \ \ \ \ \ \ \ \ \ \ \ \ \ \ \ \ \ \ \ \ \ \ \ \ \ \ \ \ \ \ \ \ \ \ \ \ \ \ \  \ \ \ \ \ \ \ \ \  \mbox{if }i,j=m+1,s=t,\\
\\
\Psi_1(\bar{\ell})\sin\frac{\pi}{k}+O(e^{-(1+\xi)\ell}) \mbox{ if }(i,j)=(m+1,m+2), s=t,\\
\\
-\Psi_1(\bar{\ell})\sin\frac{\pi}{k}+O(e^{-(1+\xi)\ell}) \mbox{ if }(i,j)=(m+1,m+2n), t=s-1,\\
\\
O(e^{-(1+\xi)\ell}) \mbox{ otherwise }.
\end{array}
\right.
\end{equation*}

\medskip
\noindent
{\bf Computation of $C$}: Similarly, we have
\begin{eqnarray*}
\int L(Z_{1,2}^0)Z_{1,2}^0&&=-p\int w^{p-1}(x-y_1)\frac{\partial w(x-y_1)}{\partial x_2}\sum_{z\in\Pi_{y_1}}\frac{\partial w(x-z)}{\partial x_2})+O(e^{-(1+\xi)\ell})\\
&&=-[\Psi_2(\ell)+2(\Psi_1(\bar{\ell})\cos^2\frac{\pi}{k}+\Psi_2(\bar{\ell})\sin^2\frac{\pi}{k})]+O(e^{-(1+\xi)\ell}).
\end{eqnarray*}
and
\begin{equation}
\int L(Z_{1,2}^0)Z_{i,2}^j=\left\{\begin{array}{l}
\Psi_1(\bar{\ell})\cos^2\frac{\pi}{k}-\Psi_2(\bar{\ell})\sin^2\frac{\pi}{k}+O(e^{-(1+\xi)\ell})\\
 \ \ \ \ \ \ \ \ \ \ \ \ \ \ \ \ \ \ \ \ \ \ \ \ \ \ \ \ \ \ \  \ \ \ \ \ \ \ \ \ \mbox{ if }(i,j)=(1,1) \mbox{ or }(1,k-1),\\
 \\
O(e^{-(1+\xi)\ell}) \mbox{ otherwise}.
\end{array}
\right.
\end{equation}

Furthermore, one can get that for $i\geq 2$,
\begin{equation*}
\int L(Z_{i,2}^s)Z_{j,2}^t=\left\{\begin{array}{l}
-2\Psi_2(\ell)+O(e^{-(1+\xi)\ell}), \mbox{ if }i=j,s=t, i\leq m, \\
\\
\Psi_2(\ell)+O(e^{-(1+\xi)\ell}) \mbox{ if }j=i-1 \mbox{ or }i+1, s=t \ i\leq m,\\
\\
2\Psi_2(\bar{\ell})+O(e^{-(1+\xi)\ell})\mbox{ if }i=j, s=t, \ m+2\leq i\leq 2n+m,\\
\\
\Psi_2(\bar{\ell})+O(e^{-(1+\xi)\ell})\mbox{ if }j=i-1\mbox{ or }i+1, s=t,  \ m+2\leq i\leq 2n+m,\\
\\
-[\Psi_2(\ell)-2(\Psi_1(\bar{\ell})\cos^2\frac{\pi}{k}+\Psi_2(\bar{\ell})\sin^2\frac{\pi}{k})]+O(e^{-(1+\xi)\ell}) \\
\ \ \ \ \ \ \ \ \ \ \ \ \ \ \ \ \ \ \ \ \ \ \ \ \ \ \ \ \ \ \ \ \ \ \ \ \ \ \ \ \ \ \ \ \ \  \ \ \ \ \ \ \ \ \ \mbox{if }i,j=m+1,s=t,\\
\\
\Psi_2(\bar{\ell})\sin\frac{\pi}{k}+O(e^{-(1+\xi)\ell}) \mbox{ if }(i,j)=(m+1,m+2), s=t,\\
\\
-\Psi_2(\bar{\ell})\sin\frac{\pi}{k}+O(e^{-(1+\xi)\ell}) \mbox{ if }(i,j)=(m+1,m+2n), t=s-1,\\
\\
O(e^{-(1+\xi)\ell}) \mbox{ otherwise }.
\end{array}
\right.
\end{equation*}

\medskip
\noindent
{\bf Computation of $B$}: Next we consider $\int L(Z_{i,1}^s)Z_{j,2}^t$ and $\int L(Z_{i,2}^s)Z_{j,1}^t$.
First by the symmetry, we have that
\begin{eqnarray*}
\int L(Z_{1,1}^0)Z_{1,2}^0=0,
\end{eqnarray*}
and
\begin{eqnarray*}
\int L(Z_{1,1}^0)Z_{1,2}^1&&=\int pw^{p-1}(x-R_k^1y_1)\frac{\partial w(x-y_1)}{\partial x_1}R_k^{1,\perp}\cdot \nabla w(x-R_k^1y_1)\\
&&=\sin\frac{\pi}{k}\cos\frac{\pi}{k}(\Psi_1(\bar{\ell})+\Psi_2(\bar{\ell}))+O(e^{-(1+\xi)\ell}).
\end{eqnarray*}
Similarly, we can get that
\begin{equation*}
\int L(Z_{1,1}^0)Z_{1,2}^{k-1}=-\sin\frac{\pi}{k}\cos\frac{\pi}{k}(\Psi_1(\bar{\ell})+\Psi_2(\bar{\ell}))+O(e^{-(1+\xi)\ell}),
\end{equation*}
\begin{equation*}
\int L(Z_{m+1,1}^0)Z_{m+2,2}^{0}=\int L(Z_{m+1,1}^0)Z_{2n+m,2}^{k-1}=-\Psi_2(\bar{\ell})\cos\frac{\pi}{k}+O(e^{-(1+\xi)\ell}),
\end{equation*}
and
\begin{equation*}
\int L(Z_{i,1}^s)Z_{j,2}^t=O(e^{-(1+\xi)\ell}) \mbox{ otherwise }.
\end{equation*}
Similarly, we have the following expansion for $\int L(Z_{i,2}^s)Z_{j,1}^t$:
\begin{equation*}
\int L(Z_{i,2}^s)Z_{j,1}^t=\left\{\begin{array}{l}
-\sin\frac{\pi}{k}\cos\frac{\pi}{k}(\Psi_1(\bar{\ell})+\Psi_2(\bar{\ell}))+O(e^{-(1+\xi)\ell}) \mbox{ if }i,j=1, t=s-1,\\
\\
\sin\frac{\pi}{k}\cos\frac{\pi}{k}(\Psi_1(\bar{\ell})+\Psi_2(\bar{\ell}))+O(e^{-(1+\xi)\ell}) \mbox{ if }i,j=1, t=s+1,\\
\\
\Psi_1(\bar{\ell})\cos\frac{\pi}{k}+O(e^{-(1+\xi)\ell})\mbox{ if }(i,j)=(m+2,m+1), s=t \mbox{ or }\\ \ \ \ \ \ \ \ \ \ \ \ \ \ \ \ \ \ \ \ \ \ \ \ \ \ \ \ \ \ \ \ \ \ \ (i,j)=(2n+m,m+1), t=s+1,\\
\\
O(e^{-(1+\xi)\ell})\mbox{ otherwise }.
\end{array}
\right.
\end{equation*}

\medskip
\noindent
{\bf Computation of $H_\alpha$}: For the matrix $H_\alpha$ for $\alpha=3,\cdots, N$, the computation is much easier, we directly use the (\ref{eq702}) to get the following expansion:
\begin{equation*}
\int L(Z_{i,\alpha}^s)Z_{j,\alpha}^t=\left\{\begin{array}{l}
-(\Psi_2(\ell)+2\Psi_2(\bar{\ell}))+O(e^{-(1+\xi)\ell}) \mbox{ if }(i,j)=(1,1),s=t,\\
\\
\Psi_2(\bar{\ell})+O(e^{-(1+\xi)\ell}) \mbox{ if }(i,j)=(1,1),t=s-1 \mbox{ or }s+1,\\
\\
2\Psi_2(\bar{\ell})-\Psi_2(\ell)+O(e^{-(1+\xi)\ell}) \mbox{ if }(i,j)=(m+1,m+1),s=t,\\
\\
-2\Psi_2(\ell)+O(e^{-(1+\xi)\ell}) \mbox{ if }i=j,s=t, 2\leq i\leq m,\\
\\
\Psi_2(\ell)+O(e^{-(1+\xi)\ell}) \mbox{ if }j=i+1\mbox{ or }i-1,s=t, 2\leq i\leq m,\\
\\
2\Psi_2(\bar{\ell})+O(e^{-(1+\xi)\ell}) \mbox{ if }i=j,s=t, m+2\leq i\leq m+2n,\\
\\
\Psi_2(\bar{\ell})+O(e^{-(1+\xi)\ell}) \mbox{ if }j=i+1\mbox{ or }i-1,s=t, m+2\leq i\leq m+2n,\\
\\
O(e^{-(1+\xi)\ell})\mbox{ otherwise}.
\end{array}
\right.
\end{equation*}

\end{document}